\newtheorem{theorem}{Theorem}[section]
\newtheorem*{theorem*}{Theorem}
\newtheorem{prop}[theorem]{Proposition}
\newtheorem{lemma}[theorem]{Lemma}
\newtheorem{cor}[theorem]{Corollary}
\theoremstyle{definition}
\newtheorem{definition}[theorem]{Definition}
\theoremstyle{remark}
\numberwithin{equation}{section}
\DeclareMathOperator{\F}{\mathcal{F}}
\DeclareMathOperator{\B}{\mathcal{B}}
\begin{document}
\tikzset{
        ribbon/.style={
            preaction={
                draw,
                line width=0.2cm,
                black!30!#1
            },
            line width=0.15cm,
            #1
        },
        ribbon/.default=gray
    }
    
\tikzset{
        gribbon/.style={
                    preaction={
                draw,
                line width=0.2cm,
                green!#1
            },
            line width=0.15cm,
            #1
        },
        ribbon/.default=gray
    }
    
\tikzset{
        bribbon/.style={
            preaction={
                draw,
                line width=0.2cm,
                blue!#1
            },
            line width=0.15cm,
            #1
        },
        ribbon/.default=gray
    }
\title{Interlacement and Activities in Delta-matroids}

\author{Ada Morse}
\address{Department of Mathematics and Statistics, University of Vermont, Burlington, Vermont 05401}


\subjclass[2010]{Primary 05B35, 05C10; Secondary 05C31, 05C62}

\date{\today}


\keywords{graph theory, topological graph theory, matroids, delta-matroids, interlace graph}

\begin{abstract} We generalize theories of graph, matroid, and ribbon-graph activities to delta-matroids. As a result, we obtain an activities based feasible-set expansion for a transition polynomial of delta-matroids defined by Brijder and Hoogeboom. This result yields feasible-set expansions for the two-variable Bollob\'{a}s-Riordan and interlace polynomials of a delta-matroid. In the former case, the expansion obtained directly generalizes the activities expansions of the Tutte polynomial of graphs and matroids.
\end{abstract}

\maketitle

\section{Introduction}

Delta-matroids are a generalization of matroids that have been the subject of increased interest recently in part due to the rediscovery \cite{2016arXiv160201306C,CMNR2014} of a connection (originally due to Bouchet \cite{DBLP:journals/dm/Bouchet89}) between delta-matroids and embedded graphs that generalizes the classical connection between matroids and abstract graphs. Delta-matroids also arise, independently, in the study of skew-symmetric matrices, and have a direct connection to abstract graphs by way of the adjacency matrix (a connection which does not generalize the classical connection between matroids and abstract graphs.)

In the context of delta-matroids arising from the adjacency matrix, Brijder and Hoogeboom have defined a transition polynomial $Q_{(w,x,0)}(D;y)$ of delta-matroids satisfying a deletion-contraction property reminiscent of the Tutte polynomial of a matroid \cite{BH2014}. In this paper, we show that this transition polynomial also satisfies a delta-matroid analog of the activities expansion of the Tutte polynomial. Recall that the activities expansion of the Tutte polynomial is obtained by associating, to each basis $B$ of a matroid $M = (E,\mathcal{B})$, a set of ``internally active'' edges and a set of ``externally active'' edges according to some arbitrary total order $<$ on the groundset of $E$ (see Section 3 for details.) The analog of bases in delta-matroids are called feasible sets, and, by generalizing the spanning quasi-tree activities of \cite{De,CKS2011,VT2011}, we show that incorporating orientability of points in the definition of activity yields a feasible-set expansion for the transition polynomial:

\begin{theorem*} \label{thm:transfeas} Let $D = (E,\mathcal{F})$ be a delta-matroid. Let $<$ be any total order on $E$. Let $F \in \mathcal{F}$. Let $i(F)$ be the number of internal, active, and orientable points with respect to $F$ and let $j(F)$ be the number of external, active, and orientable points with respect to $F$. Then
\begin{equation*}
    Q_{(w,x,0)}(D;y) = \sum_{F \in \mathcal{F}} w^{|E| - |F|} x^{|F|} (1 + (w/x)y)^{i(F)} (1 + (x/w)y)^{j(F)}
\end{equation*}
\end{theorem*}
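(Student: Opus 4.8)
The plan is to prove the identity by induction on $\abs{E}$, using the deletion--contraction property of $Q_{(w,x,0)}$ due to Brijder and Hoogeboom \cite{BH2014} (an alternative would be to start from their transition-system expansion of $Q$ and reorganize it, but the inductive route keeps the activity bookkeeping transparent). The base case $E=\emptyset$ is immediate: the only delta-matroid is $(\emptyset,\{\emptyset\})$, both sides equal $1$, and there are no points of any kind to count. For the inductive step, let $e$ be the $<$-largest element of $E$, write $E'=E-e$, and split the sum on the right according to whether $e\in F$.

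For the reduction, recall the local classification of $e$ with respect to $D$: $e$ is a ribbon loop (orientable or non-orientable), a coloop-type element, or a proper element (terminology as in \cite{CMNR2014,BH2014}), and this type depends only on $D$, not on any individual feasible set. In each case the deletion--contraction property expresses $Q_{(w,x,0)}(D;y)$ as an explicit $\{w,x,y\}$-combination of $Q_{(w,x,0)}(D\setminus e;y)$ and $Q_{(w,x,0)}(D/e;y)$ (with one of the two possibly absent), where $D\setminus e$ and $D/e$ are delta-matroids on $E'$. On the other side, the feasible sets $F$ of $D$ with $e\notin F$ are exactly the feasible sets of $D\setminus e$, and those with $e\in F$ biject with the feasible sets of $D/e$ via $F\mapsto F-e$; applying the inductive hypothesis to $D\setminus e$ and $D/e$ rewrites each half of the right-hand sum as a combination of the same two polynomials. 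It then suffices to verify, case by case, that the two combinations agree.

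The matching has two ingredients. First, activities of the smaller elements are stable: for $f<e$ and a feasible set $F$ with $e\notin F$ (respectively $e\in F$), the element $f$ is internal/external, active, and orientable with respect to $F$ in $D$ if and only if it has the same status with respect to $F$ (respectively $F-e$) in $D\setminus e$ (respectively $D/e$). This uses that $e$ is $<$-maximal, so no exchange relevant to the status of $f$ can involve $e$, together with a check that deleting or contracting $e$ leaves the pertinent feasibility relations among subsets of $E'$ unchanged. Hence the part of the weight of $F$ supported on $E'$ in the right-hand sum is exactly the weight contributed by the image of $F$ in $D\setminus e$ or $D/e$. Second, the contribution of $e$ itself is forced by its type: one shows that if $e\in F$ then $e$ is internally active, with its orientability determined by the type of $e$, and symmetrically if $e\notin F$, so that $e$ contributes a single-element factor equal to one of
\begin{equation*}
 w,\qquad x,\qquad w+xy=w\bigl(1+(x/w)y\bigr),\qquad x+wy=x\bigl(1+(w/x)y\bigr),
\end{equation*}
according to whether $e\notin F$ or $e\in F$ and whether or not $e$ is an active orientable point. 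Multiplying this single-element factor into the stable factor reproduces precisely the coefficient prescribed by the deletion--contraction recursion in that case, which closes the induction.

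The main obstacle is the first ingredient --- the invariance of the internal/external, active, and orientable status of the elements $f<e$ under deletion and contraction of the maximal element $e$ --- and within it the handling of orientability, which has no counterpart in the classical matroid argument and is exactly what produces the weights $(w/x)y$ and $(x/w)y$ rather than plain powers of $y$. I would isolate this invariance as a preparatory lemma, proved by analyzing how the symmetric-exchange structure of $D$ and the notion of an orientable point behave under passing to $D\setminus e$ and $D/e$, building on the quasi-tree activities of \cite{De,CKS2011,VT2011} as generalized earlier in the paper.
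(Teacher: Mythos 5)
Your inductive strategy is sound and does yield the theorem, but it is organized differently from the paper. The paper does not induct on the $<$-largest element of $E$; it builds a global computation tree $\mathcal{T}(D)$ by always branching on the highest-ordered \emph{nonsingular} point of the current minor, shows that each leaf covers a unique feasible set and vice versa (Lemmas \ref{lem:leafcover} and \ref{lem:onefeasible}), and then proves that the ground set of the leaf $L(F)$ is exactly the set of active orientable points with respect to $F$ (Theorem \ref{thm:activeorientable}). The advantage of the paper's choice is that Theorem \ref{thm:transdc} always applies verbatim: singular points are never deleted or contracted, so they accumulate at the leaves where the base case $(x+wy)^c(w+xy)^l$ handles them. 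Your version, which always removes the globally $<$-maximal element $e$, needs an ingredient that Theorem \ref{thm:transdc} as stated does not supply, namely the factorizations $Q_{(w,x,0)}(D;y)=(x+wy)\,Q_{(w,x,0)}(D/e;y)$ when $e$ is a coloop and $Q_{(w,x,0)}(D;y)=(w+xy)\,Q_{(w,x,0)}(D\setminus e;y)$ when $e$ is a loop; these are true but must be proved (e.g.\ by showing loops and coloops persist under deletion and contraction of other points and propagating the base case). In exchange, your stability lemma is genuinely easier than you fear: since $e$ is $<$-maximal and $e\notin F\Delta\{f\}$, $F\Delta\{f,g\}$ for $f,g<e$, the feasibility conditions defining orientability, pairing, and hence interlacement and activity of the elements of $E-e$ are literally unchanged in $D\setminus e$ (resp.\ $D/e$), so the part of the weight supported on $E-e$ transfers immediately.

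The one place your write-up is imprecise is the contribution of $e$ itself. It is not true that $e\in F$ forces $e$ to be internally active (e.g.\ in $(\{e\},\{\{e\},\emptyset\})$ the point $e$ is active but nonorientable, and in larger examples a nonsingular maximal $e$ can be inactive). The fact you actually need, and should isolate, is the dichotomy: the $<$-maximal element $e$ is active and orientable with respect to $F$ if and only if $e$ is singular in $D$. The ``if'' direction is easy ($F\Delta\{e\}$ and $F\Delta\{e,b\}$ are never feasible when $e$ is a coloop or loop), but the ``only if'' direction requires the symmetric exchange axiom: if $e$ is nonsingular and $F$-orientable, some feasible set of $D*F$ contains $e$, and exchanging against $\emptyset$ produces a point $F$-interlaced with $e$, so $I(F;e)\neq\emptyset$ and $e$ is inactive. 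This is exactly the paper's Lemma \ref{prop:trivialorientable}, and without it your case analysis does not match the coefficients $w$ and $x$ produced by the deletion--contraction recurrence in the nonsingular case. With that lemma added and the loop/coloop factorizations supplied, your induction closes.
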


As applications of this result, we also obtain feasible-set expansions for the two-variable Bollob\'{a}s-Riordan and interlace polynomials. The feasible-set expansion of the Bollob\'{a}s-Riordan polynomial is of particular interest, as it directly  generalizes the basis expansion of the Tutte polynomial. We conclude by discussing open questions regarding these feasible-set expansions as well as activities expansions more generally.
\section{Preliminaries}

\subsection{Delta-matroids} Delta-matroids are a generalization of matroids that were introduced independently by many authors in the late 1980s \cite{B87greedy,DH86,CK88}.

\begin{definition} A \emph{delta-matroid} is a pair $D = (E,\F)$ where $E$ is a finite set, called the \emph{ground set}, and $\F$ is a nonempty collection of subsets of $E$ called \emph{feasible sets} satisfying the following \emph{symmetric exchange axiom}:
\begin{center} for all $X,Y \in \F$ and $a \in X \Delta Y$,\\ there exists $b \in X \Delta Y$, not necessarily distinct from $a$, such that $X \Delta \{a,b\} \in \F$
\end{center}
where $X \Delta Y = (X \cup Y) \setminus (X \cap Y)$ is the usual symmetric difference of sets. When necessary to specify the underlying delta-matroid, we will denote $E$ by $E(D)$ and $\F$ by $\F(D)$.
\end{definition}

Note that there are essentially three cases of the symmetric exchange axiom: either $a \in X \setminus Y$ and $b \in Y \setminus X$ or vice versa (this is the usual basis exchange axiom for matroids); both $a$ and $b$ are in $X \setminus Y$; or both $a$ and $b$ are in $Y \setminus X$. If all the feasible sets in $\F$ have the same cardinality, the latter two cases cannot occur, and so the first holds for all feasible sets $X$ and $Y$. That is, a matroid is precisely a delta-matroid all of whose feasible sets have the same size.

We will need the following basic definitions. Let $D = (E,\F)$ be a delta-matroid.  A point $a \in E$ contained in every feasible set of $D$ is said to be a \emph{coloop}, while a point $a \in E$ contained in no feasible set of $D$ is said to be a \emph{loop}. A point is \emph{singular} if it is either a loop or a coloop, and \emph{nonsingular} otherwise. Let $\mathcal{F}_{\max}$ be the collection of maximum cardinality feasible sets of $D$ and $\mathcal{F}_{\min}$ the collection of minimum cardinality feasible sets of $D$. It can be shown that $D_{\max} = (E,\mathcal{F}_{\max})$ and $D_{\min} = (E,\mathcal{F}_{\min})$ are indeed matroids. Let $r_{\max}$ be the matroid rank function of $D_{\max}$ and let $r_{\min}$ be the matroid rank function of $D_{\min}$.

We will require three operations on delta-matroids: deletion, contraction, and twisting.

\begin{definition} (Deletion and contraction) Let $a \in E$. If $a$ is not a coloop, define \emph{$D$ delete $E$} to be the set system $D \setminus a = (E - a, \{F : F \in \F, a \not \in F\})$. If $a$ is not a loop, define \emph{$D$ contract $E$} to be the set system $D/a = (E - a, \{F - a: F \in \F, a \in F\})$. If $a$ is a loop, define $D/a$ to be $D\setminus a$. If $a$ is a coloop, define $D\setminus a$ to be $D/a$. It can be shown that the order in which deletions and contractions are performed does not matter. If $A \subseteq E$ then the \emph{restriction} of $D$ to $A$ is the delta-matroid $D|A = D \setminus (E \setminus A)$.
\end{definition}

Observe that if $\emptyset \in \F$, then there are no coloops in $E$, and so, for any $A \subseteq E$, $\F(D|A) = \{F \in \F: F \subseteq A\}$.

\begin{definition} (Twist) For $X \subseteq E$, define the \emph{twist} $D*X := (E, \{X \Delta F: F \in \F\})$. 
\end{definition}
Using the identity $(A \Delta C) \Delta (B \Delta C) = A \Delta B$, it is straightforward to show that the twist of a delta-matroid is a delta-matroid. Note that the twist of a matroid is not necessarily a matroid (although it is, of course, a delta-matroid). Also, observe that if $F$ is itself a feasible set, then $\emptyset \in \mathcal{F}(D*F)$.

We will also need the following definition of connectivity in delta-matroids, which generalizes the standard definitions of connectivity for matroids.

\begin{definition} (Connected/disconnected) We say a delta-matroid $D = (E,\F)$ is \emph{disconnected} if there exist delta-matroids $D_1 = (E_1,\F_1)$ and $D_2 = (E_2,\F_2)$ such that $E_1 \cap E_2 = \emptyset$ and $(E_1 \cup E_2, \{F_1 \cup F_2: F_1 \in \F_1, F_2 \in \F_2\}) = (E,\F)$. In this case we write $D = D_1 \oplus D_2$. If a delta-matroid is not disconnected, we say it is \emph{connected}.
\end{definition}

Given a graph $G$, we can construct a delta-matroid from its adjacency matrix as follows.

\begin{definition} (adjacency delta-matroid) Let $G$ be a graph, allowing single loops but not multiple loops or edges. Let $A$ be the $V(G) \times V(G)$ adjacency matrix of $G$, considered over the field $GF(2)$. A set $X \subseteq V(G)$ is \emph{feasible} if the principle submatrix $A[X]$ is invertible. By convention, $\emptyset$ is always feasible. The \emph{adjacency delta-matroid} of $G$ is the set system $\mathcal{A}_G = (V(G), \{ X \subseteq V(G) : X \text{ is feasible.}\}$.
\end{definition}

Note that, since $\emptyset$ is always feasible in $\mathcal{A}_G$, the adjacency delta-matroid of a graph is almost never the graphic matroid of a graph. We refer the reader to e.g. \cite{Bou87b,BD91} for background on adjacency delta-matroids and representability of delta-matroids more generally.

\subsection{Interlacement and ribbon graphs} \label{sec:ribbon} Interlacement in ribbon graphs is a generalization of interlacement in double-occurrence words, which has been studied extensively in connection to a question of Gauss regarding which double-occurrence words can be represented by Eulerian circuits in plane 4-regular graphs \cite{B87circle,B87circle2,B94,F97,RR76,R99}. We will require the following definitions from ribbon graph theory, for which we follow closely the development of \cite{EMM2013}. 

A \emph{ribbon graph} $G = (V(G),E(G))$ is a surface with boundary presented as the union of two sets of discs, a set $V(G)$ of \emph{vertices} and a set $E(G)$ of \emph{edges}, satisfying the following conditions:
\begin{enumerate}
    \item The vertices and edges intersect in disjoint line segments.
    \item Each such line segment lies on the boundary of precisely one vertex and precisely one edge.
    \item Every edge contains exactly two such line segments.
\end{enumerate}

Throughout the paper, we will assume all ribbon graphs are connected. Note that ribbon graphs are equivalent to cellularly embedded graphs, where a \emph{cellular embedding} of a graph $G$ on a closed compact surface $\Sigma$ is a drawing of $G$ on $\Sigma$ such that edges intersect only at their endpoints and each component of $\Sigma - G$ is homeomorphic to a disc. Two cellularly embedded graphs in the same surface are considered \emph{equivalent} if there is a homeomorphism of the surface taking one to the other. A cellularly embedded graph can be obtained from a ribbon graph by gluing discs to the holes in the ribbon graph and retracting the ribbon graph. Ribbon graphs are considered \emph{equivalent} if their associated cellularly embedded graphs are equivalent.

A \emph{spanning quasi-tree} of a ribbon graph $G$ is a spanning subgraph of $G$ having exactly one boundary component. 

\begin{figure}[h]
    \centering
    \includegraphics[scale=.7]{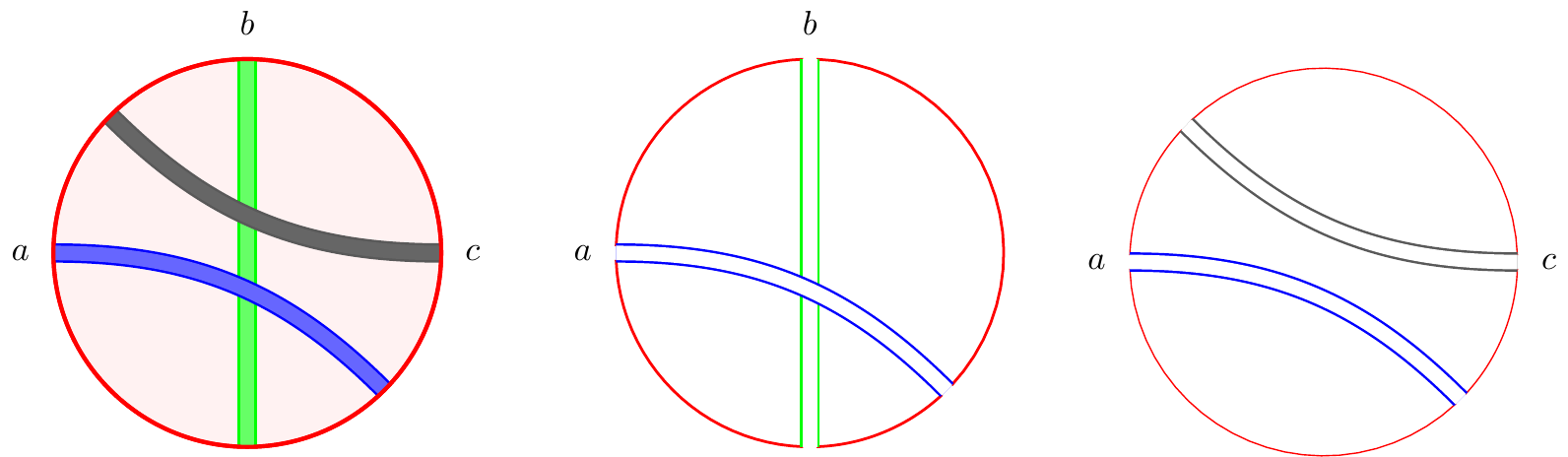}
    \caption{Left to right: a single-vertex ribbon graph with ribbon edge loops drawn arching above the vertex disc; the single boundary component demonstrating that the edges $a$ and $b$ form a quasi-tree; the three boundary components demonstrating that the edges $a$ and $c$ do not.}
    \label{fig:rd}
\end{figure}

Let $G$ be a ribbon graph and let $A \subseteq E(G)$. The \emph{partial dual} $G^{A}$ of $G$ is formed as follows: regard the boundary components of the induced ribbon subgraph $(V(G),A)$ of $G$ as curves on the surface of $G$. Glue a disc to $G$ along each connected component of this curve and remove the interior of all vertices of $G$. In the case that $Q$ is a spanning quasi-tree of $G$, we will write $G^Q$ for $G^{E(Q)}$. If $e \in E(G)$, then $G \setminus e$ is the ribbon graph $(V(G),E(G) - e)$. We define $G/e$ to be the ribbon graph $G^{e}\setminus e$. We remark the order in which edges are deleted or contracted does not matter, and so if $A$ and $B$ are disjoint subsets of $E(G)$, we write $G / A \setminus B$ for the ribbon graph obtained by contracting the edges in $A$ in any order and deleting the edges in $B$ in any order. The following well-known pronerty of ribbon graphs (see e.g. \cite{VT2011}) will be useful in defining interlacement.

\begin{prop} Let $G$ be a ribbon graph with spanning quasi-tree $Q$. Then $G^{Q}$ is a single-vertex ribbon graph.
\end{prop}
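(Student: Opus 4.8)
The plan is to unwind the definition of the partial dual and observe that the vertices of $G^{A}$ are in natural bijection with the boundary components of the induced ribbon subgraph $(V(G),A)$. Recall that $G^{A}$ is formed by regarding the boundary components of $(V(G),A)$ as curves on the surface of $G$, gluing a disc to $G$ along each connected component of this curve, and then deleting the interiors of all of the original vertices of $G$. What remains after this procedure consists of the edge discs of $G$ together with the newly glued discs, and it is exactly these newly glued discs that constitute the vertex set of $G^{A}$. Hence the number of vertices of $G^{A}$ equals the number of connected components of that boundary curve, i.e.\ the number of boundary components of $(V(G),A)$.

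First I would specialize this observation to $A = E(Q)$. Since $Q$ is a spanning subgraph of $G$, we have $V(Q) = V(G)$, so the induced ribbon subgraph $(V(G),E(Q))$ appearing in the definition of the partial dual is literally $Q$ itself. By the definition of a spanning quasi-tree, $Q$ has exactly one boundary component. Therefore in forming $G^{Q} = G^{E(Q)}$ we glue exactly one disc, and after deleting the interiors of the original vertices of $G$ this single disc is the entire vertex set of $G^{Q}$. Consequently $G^{Q}$ is a single-vertex ribbon graph. That $G^{Q}$ is a ribbon graph in the first place is the standard fact that partial duality preserves the class of ribbon graphs (Chmutov), so no separate argument is needed for that.

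I do not expect a genuine obstacle here: the statement is essentially a bookkeeping consequence of the definitions. The only point requiring mild care is the identification of the ``induced ribbon subgraph $(V(G),E(Q))$'' in the definition of $G^{A}$ with the spanning quasi-tree $Q$, which is immediate once one recalls that spanning subgraphs retain the full vertex set. If one wanted a more hands-on justification that the vertices of $G^{A}$ correspond precisely to the boundary components of $(V(G),A)$, one could instead phrase the argument using an arrow-presentation or signed rotation system for partial duals; but the disc-gluing description given in the preliminaries already makes the vertex count transparent, so I would simply cite it.
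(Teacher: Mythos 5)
Your argument is correct: the vertices of $G^{A}$ are by construction the discs glued along the boundary components of the spanning ribbon subgraph $(V(G),A)$, so taking $A = E(Q)$ with $Q$ a quasi-tree (one boundary component) yields exactly one vertex. The paper states this proposition without proof, citing it as well known, and your definition-unwinding is precisely the standard argument behind that citation.
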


We can now define interlacement with respect to quasi-trees as in \cite{VT2011}.

\begin{definition} \label{def:ribbon_int} Let $G$ be a ribbon graph with spanning quasi-tree $Q$. Let $e,f \in E(G)$. We say $e$ and $f$ are \emph{interlaced with respect to $Q$} or \emph{$Q$-interlaced} if they are met in the order $e...f...e...f...$ while traversing the boundary of the single vertex of $G^Q$.
\end{definition}

Recall that an edge $e$ in a single-vertex ribbon graph $G$ is called \emph{nonorientable} if the surface formed by $e$ together with the vertex is homeomorphic to a M\"{obius} band \cite{EMM2013}. In the case that $G$ is an arbitrary ribbon graph with spanning quasi-tree $Q$, we will say that $e \in E(G)$ is \emph{$Q$-nonorientable} if $e$ is a nonorientable loop in $G^Q$. In addition, we will say that $e,f \in E(G)$ are \emph{$Q$-paired} if $\{e,f\}$ is a quasi-tree in $G^Q$ and  \emph{$Q$-separated} otherwise. Then the following theorem is straightforward to verify (see Figure \ref{fig:ribbonchar} for one case.)

\begin{theorem} \label{thm:ribbonchar} Let $G$ be a ribbon graph with spanning quasi-tree $Q$. Let $e,f \in E(G)$. Then $e$ and $f$ are $Q$-interlaced if and only if one of the following holds: (1) at most one of $e$ or $f$ is $Q$-nonorientable and $\{e,f\}$ is $Q$-paired, or (2) both $e$ and $f$ are $Q$-nonorientable and $\{e,f\}$ is $Q$-separated.

\begin{figure}
    \centering
   \subfigure[interlaced loops]{%
\includegraphics[height=2in]{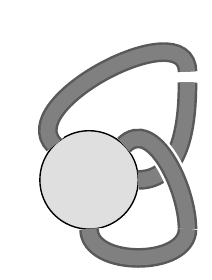}}%
\qquad
\subfigure[uninterlaced loops]{%
\includegraphics[height=2in]{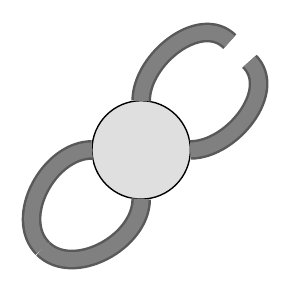}}%

    \caption{A figure to accompany the case of Theorem \ref{thm:ribbonchar} where at most one of the two loops is $Q$-nonorientable. The segments of the upper loop in each subfigure can be joined with a twist or without a twist. In either case, tracing boundary components shows that the interlaced loops form a quasi-tree while the uninterlaced loops do not. Hence, the interlaced loops are $Q$-paired while the uninterlaced loops are not.}
    \label{fig:ribbonchar}
\end{figure}
\end{theorem}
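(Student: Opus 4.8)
The plan is to reduce the statement to a finite check on single-vertex ribbon graphs with two loops. The first step is to observe that all three properties appearing in the theorem --- $Q$-interlacement of $e$ and $f$, $Q$-nonorientability of each of $e$ and $f$, and whether $\{e,f\}$ is $Q$-paired or $Q$-separated --- depend only on the sub-ribbon-graph $H := (V(G^Q),\{e,f\})$ obtained from the single-vertex ribbon graph $G^Q$ by deleting every edge other than $e$ and $f$. Indeed, $Q$-interlacement is defined by the cyclic order in which the ends of $e$ and $f$ occur along the boundary of the unique vertex of $G^Q$, and this cyclic order is unaffected by deleting the remaining edges; $Q$-nonorientability of $e$ (resp.\ $f$) is a property of $e$ (resp.\ $f$) together with the vertex alone; and $\{e,f\}$ being $Q$-paired means, by definition, that $H$ regarded as a spanning subgraph of $G^Q$ has exactly one boundary component, i.e.\ is a quasi-tree. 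So it suffices to verify the equivalence for an arbitrary single-vertex ribbon graph $H$ with exactly two loops.

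Next I would enumerate such $H$. Up to the data that matters, $H$ is determined by two independent binary choices: whether the pairs of ends of $e$ and of $f$ interleave around the vertex boundary (equivalently, whether $e$ and $f$ are interlaced), and whether each of $e$ and $f$ is an orientable or a nonorientable loop. This gives eight configurations (six up to swapping $e$ and $f$). For each configuration I would count the boundary components of $H$ by tracing the boundary, starting from the basic facts that a single orientable loop with the vertex is an annulus (two boundary components) and a single nonorientable loop with the vertex is a M\"{o}bius band (one boundary component), and then attaching the second loop. The bookkeeping is shortened by noting that $\chi(H) = -1$ and that $H$ is orientable exactly when both $e$ and $f$ are orientable, so the number of boundary components is constrained to two candidate values in each case and a single trace decides between them. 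Recording, in each of the eight cases, whether $e,f$ are interlaced, how many of $e,f$ are nonorientable, and whether $H$ has one boundary component (so $\{e,f\}$ is $Q$-paired) or more (so $Q$-separated), one reads off that the two sides of the claimed equivalence always agree: interlaced with at most one nonorientable loop yields one boundary component; interlaced with both nonorientable yields two; non-interlaced with both nonorientable yields one; and every remaining non-interlaced case yields two or three.

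The only step with genuine content is the boundary count in the presence of a twisted loop: once a nonorientable loop has been attached, the induced orientation on the resulting boundary circle runs one way along part of the old vertex boundary and the opposite way along the rest, and one must attach the second loop with this in mind. The governing rule is that attaching a band with both feet on a single boundary circle replaces that circle by two circles when the band is untwisted with co-oriented feet, or twisted with anti-oriented feet, and by one circle otherwise. Getting this rule right and applying it consistently is the main point; everything else is routine. Figure \ref{fig:ribbonchar} illustrates the case in which at most one of $e,f$ is nonorientable.
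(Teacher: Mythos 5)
Your proof is correct and follows essentially the route the paper intends: the paper declares the theorem ``straightforward to verify'' and illustrates one case in Figure \ref{fig:ribbonchar} by tracing boundary components, which is exactly the finite case-check you carry out. Your explicit reduction to the two-loop single-vertex ribbon graph $H$ and the Euler-characteristic/orientability constraint narrowing the possible boundary counts are sensible ways of organizing that verification, and your tabulated boundary counts in all eight configurations are accurate.
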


 In the next section, we will use this characterization of interlacement to motivate a definition of interlacement for delta-matroids.

\subsection{Delta-matroids and ribbon graphs}

We will be exploiting a connection between delta-matroids and ribbon graphs, initially observed by Bouchet \cite{DBLP:journals/dm/Bouchet89} and recently developed further by Chun et. al. \cite{CMNR2014}. 

\begin{theorem} \cite{CMNR2014} Let $G$ be a ribbon graph. Let $\F$ be the set of all spanning quasi-trees of $G$. Then $D(G) := (E(G), \F)$ is a delta-matroid, called the \emph{graphic delta-matroid} of $G$.
\end{theorem}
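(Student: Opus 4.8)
The plan is to check the two defining properties of a delta-matroid for the collection $\F$ of spanning quasi-trees: nonemptiness and the symmetric exchange axiom. Nonemptiness is immediate: since $G$ is connected it has a spanning tree $T$, and the ribbon subgraph $(V(G),E(T))$, being a regular neighborhood of a tree, is a disc and hence has a single boundary component, so $T \in \F$. The strategy for the exchange axiom is to use partial duality to reduce to the single-vertex case, where spanning quasi-trees are governed by a symmetric matrix over $GF(2)$, and then invoke (or re-derive) Bouchet's theorem.

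For the reduction, fix any $Q \in \F$. By the proposition that $G^{Q}$ is a single-vertex ribbon graph, $H := G^{Q}$ has one vertex and $\emptyset$ is a spanning quasi-tree of $H$. Using the involution identity $(G^{A})^{B} = G^{A\Delta B}$ together with the behaviour of boundary components of spanning ribbon subgraphs under partial duality (both standard; see \cite{VT2011,CMNR2014}), one obtains that $B \subseteq E(G)$ is a spanning quasi-tree of $G$ if and only if $B\,\Delta\,E(Q)$ is a spanning quasi-tree of $H$; equivalently $\F(D(H)) = \{E(Q)\,\Delta\,F : F \in \F\}$, i.e. $D(G) = D(H) * E(Q)$. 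Since the twist of a delta-matroid is a delta-matroid (as noted after the definition of twist), it then suffices to show $D(H)$ is a delta-matroid for a single-vertex ribbon graph $H$.

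For that, I would identify $D(H)$ with an adjacency delta-matroid. Let $C$ be the $E(H)\times E(H)$ matrix over $GF(2)$ with $C_{ef}=1$ precisely when $e \neq f$ are interlaced around the vertex of $H$, and $C_{ee}=1$ precisely when $e$ is a nonorientable loop; thus $C$ is the $GF(2)$-adjacency matrix of the graph $H'$ on vertex set $E(H)$ obtained by joining interlaced edges and attaching a loop at each nonorientable edge. The core claim is that $(V(H),A)$ has exactly one boundary component if and only if the principal submatrix $C[A]$ is invertible over $GF(2)$; this is Bouchet's description of single-vertex ribbon graphs by binary symmetric matrices \cite{DBLP:journals/dm/Bouchet89,Bou87b,BD91}. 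I would prove it by induction on $|A|$: the cases $|A|\le 1$ are immediate, the case $|A|=2$ is precisely Theorem \ref{thm:ribbonchar} matched against $\det C[\{e,f\}] = C_{ee}C_{ff}+C_{ef}$ over $GF(2)$, and the inductive step removes an edge $e$ of $A$, tracks how the boundary count of $(V(H),A)$ differs from that of $(V(H),A-e)$, and checks this against the effect of a pivot (Schur-complement) step on $C[A]$. Granting the claim, $\F(D(H))$ is exactly the feasible-set collection of the adjacency delta-matroid $\mathcal{A}_{H'}$, which is a delta-matroid, and hence $D(G) = D(H)*E(Q)$ is a delta-matroid.

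The main obstacle is this single-vertex matrix step: the equivalence between ``one boundary component'' and invertibility of the corresponding principal submatrix. This is essentially Bouchet's theorem, and the delicate part is the bookkeeping in the inductive/pivot step — distinguishing whether the removed edge is an orientable loop nested inside, interlaced with, or separated from the rest of the configuration, versus a nonorientable loop, and matching each scenario to the corresponding change in $\det C[A]$. By comparison the partial-duality reduction is routine once the behaviour of boundary components under partial duals is in hand, and may be quoted wholesale from \cite{VT2011,CMNR2014} if a fully self-contained treatment is not desired.
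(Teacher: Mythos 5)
The paper does not prove this statement at all: it is imported verbatim from \cite{CMNR2014}, and ultimately from Bouchet, so there is no in-paper argument to match yours against. Your outline is the classical route to the result and is architecturally sound. Two remarks on how it fits the paper's framework. First, your partial-duality reduction is exactly the set-system identity $D(G) = D(G^{Q}) * E(Q)$ underlying the paper's Theorem~\ref{thm:twistdual}; using it here is not circular, since the statement ``$B$ is a spanning quasi-tree of $G$ iff $B \Delta E(Q)$ is one of $G^{Q}$'' is a purely topological fact about boundary components, independent of whether $\F$ satisfies symmetric exchange. Your $|A|=2$ base case does check out against Theorem~\ref{thm:ribbonchar}: $\det C[\{e,f\}] = C_{ee}C_{ff} + C_{ef}$ over $GF(2)$ equals $1$ in precisely the two interlacement-vs-pairing configurations that yield a quasi-tree.

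Two caveats on where the real work sits. Your argument quietly imports \emph{two} nontrivial theorems of Bouchet, not one: besides the single-vertex boundary/determinant correspondence, you also need that the nonsingular principal submatrices of a symmetric $GF(2)$ matrix satisfy symmetric exchange (i.e.\ that $\mathcal{A}_{H'}$ really is a delta-matroid); the paper states this with references but it is not free. More importantly, the inductive step you sketch for the core claim does not close as stated: ``one boundary component iff $C[A]$ invertible'' is not a self-improving induction hypothesis, because deleting an edge can change the boundary count by $\pm 1$ or $0$ and invertibility of $C[A-e]$ alone does not determine invertibility of $C[A]$. The statement that actually inducts is the stronger Cohn--Lempel/Traldi-type nullity formula, that the number of boundary components of $(V(H),A)$ equals $1$ plus the $GF(2)$-nullity of $C[A]$ (with the diagonal recording nonorientability); your pivot/local-complementation bookkeeping should be organized around proving that identity, from which the theorem is the nullity-zero case. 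With those two imports made explicit, the proposal is a correct reconstruction of the standard proof.
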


Note that if $G$ is a plane ribbon graph, the spanning quasi-trees of $G$ are precisely the spanning trees of $G$, and so $D(G)$ is the usual graphic matroid of $G$. Chun et al. have also shown the following equivalence between twisting and partial duals.

\begin{theorem} \cite{CMNR2014} \label{thm:twistdual} Let $G$ be a ribbon graph. Then $D(G^A) = D(G) * A$ for any $A \subseteq E(G)$.
\end{theorem}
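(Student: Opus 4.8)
The plan is to reduce to the single-edge case and then analyze how one partial-dual move changes the boundary components of spanning ribbon subgraphs. For the reduction, recall the composition law $(G^{B})^{C} = G^{B \Delta C}$ for partial duals and the identity $(D*B)*C = D*(B \Delta C)$ for twists. Writing $A = \{a_1,\dots,a_k\}$ and $A_i = \{a_1,\dots,a_i\}$, we have $G^{A_i} = (G^{A_{i-1}})^{a_i}$ and $D(G)*A_i = (D(G)*A_{i-1})*a_i$, so if we know that $D(H^{e}) = D(H)*\{e\}$ for every ribbon graph $H$ and every edge $e$, then an induction on $i$ yields $D(G^{A}) = D(G)*A$. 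Thus it suffices to treat $A = \{e\}$.

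For a single edge $e$, the delta-matroids $D(G^e)$ and $D(G)*\{e\}$ share the ground set $E(G)$, so we must show that their feasible sets agree, i.e. that $S \subseteq E(G)$ is a spanning quasi-tree of $G^e$ if and only if $S\,\Delta\,\{e\}$ is a spanning quasi-tree of $G$. Since a spanning subgraph is a spanning quasi-tree exactly when its underlying surface has one boundary component, this follows from the boundary-count identity
\begin{equation*}
  b\bigl(G^e, S\bigr) = b\bigl(G, S\,\Delta\,\{e\}\bigr) \qquad \text{for all } S \subseteq E(G),
\end{equation*}
where $b(H,T)$ denotes the number of boundary components of the spanning ribbon subgraph $(V(H),T)$ of $H$. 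Granting this, $S$ is a spanning quasi-tree of $G^e$ iff $b(G^e,S)=1$ iff $b(G, S\Delta\{e\}) = 1$ iff $S\Delta\{e\}$ is a spanning quasi-tree of $G$; as $S \mapsto S\Delta\{e\}$ is an involution, this says $\F(D(G^e)) = \{\{e\}\Delta Q : Q \in \F(D(G))\} = \F(D(G)*\{e\})$, as required.

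It remains to prove the boundary-count identity, and this is where the real work lies. Here I would argue by a local surgery: $G^e$ is built from $G$ by detaching the ribbon $e$ along the two arcs where it meets the rest of $G$ and reattaching it along the boundary of the subgraph $(V(G),\{e\})$ (the glued-disc picture), an operation affecting only a neighborhood of $e$. Tracing the boundary curves of $(V(G^e),S)$ through this neighborhood and comparing with those of $(V(G),S\Delta\{e\})$, one checks the counts match; the verification splits into cases according to whether $e\in S$ and whether $e$ is a bridge, an orientable loop, or a nonorientable loop of the subgraph in which it lies, with the involution $(G^e)^e = G$ cutting the number of genuinely distinct cases roughly in half. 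I expect this case analysis to be the main obstacle: each individual case is an elementary picture-chase, but organizing a uniform treatment of the orientable and nonorientable situations is fiddly. A cleaner alternative, which I would fall back on, is to pass to the arrow-presentation (rotation system with edge signs) model of ribbon graphs, in which partial duality with respect to $e$ is given by an explicit combinatorial rule and boundary components are the orbits of a fixed permutation; there the identity $b(G^e,S) = b(G, S\Delta\{e\})$ becomes a direct computation. Everything else in the argument — the reduction to one edge, the involution bookkeeping, and the translation between "one boundary component" and "spanning quasi-tree" — is routine.
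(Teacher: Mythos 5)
The paper does not prove this statement; it is quoted from \cite{CMNR2014}, so there is no in-paper argument to compare against. Your strategy is nonetheless the standard one and is sound: the reduction to a single edge via $(G^{B})^{C}=G^{B\Delta C}$ and $(D*B)*C=D*(B\Delta C)$ is correct, and the whole theorem does come down to the boundary-count identity $b(G^{A},S)=b(G,S\Delta A)$ for spanning ribbon subgraphs, which for $A=\{e\}$ is exactly your key lemma (sanity checks: $S=A$ gives $v(G)$ on both sides, $S=\emptyset$ gives $v(G^{A})=b(G,A)$, which is the definition of the partial dual's vertex set). The one substantive gap is that this identity is asserted with a plan rather than verified: the ``local surgery'' case analysis is named but not carried out, and that is where all the content of the theorem lives. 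Since the identity is a known lemma and your arrow-presentation fallback genuinely does reduce it to checking that one permutation's orbit count is unchanged by the local rewriting rule for $e$, I would regard the proposal as a correct outline with the central computation deferred; to make it a complete proof you would need to actually execute either the case analysis (with $e\in S$ or not, crossed with $e$ being a bridge, orientable loop, or nonorientable loop in the relevant subgraph) or the arrow-presentation calculation.
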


Note that if $F$ is a feasible set in $D(G)$ then our previous observation that $\emptyset \in \mathcal{F}(D(G) * F)$ is equivalent to the statement that $G^F$ is a single vertex ribbon graph.

\section{Fundamental graphs and activities}

In this section we lift the idea of interlacement to general delta-matroids, laying a foundation for a definition of activities that will lead in Sections 4 and 5 to the desired activities expansions of the transition, interlace, and Bollob\'{a}s-Riordan polynomials.

An essential object in delta-matroid theory is the fundamental graph of a delta-matroid with respect to a feasible set. While these fundamental graphs arise naturally when working with delta-matroids in the abstract (see \cite{B01} for even delta-matroids and \cite{Geelen} for general delta-matroids), we will derive the definition by analogy to interlacement in ribbon graphs. Note that if $G$ is a ribbon graph with spanning quasi-tree $Q$, an edge $e$ is $Q$-nonorientable if and only if $\{e\}$ is a feasible set in $G^Q$. This allows us to generalize notions of orientability and pairing with respect to quasi-trees to arbitrary delta-matroids.

\begin{definition} \label{def:delta_int} Let $D = (E,\F)$ be a delta-matroid and let $F \in \F$. A point $a \in E$ is \emph{$F$-nonorientable} if $\{a\}$ is feasible in $D*F$, and \emph{$F$-orientable} otherwise. A pair $\{a,b\} \subseteq E$ is \emph{$F$-paired} if $\{a,b\}$ is feasible in $D*F$, and is \emph{$F$-separated} otherwise. 
\end{definition}

Definition \ref{def:delta_int} streamlines, for our purposes, the language of \cite{CMNR2014} which uses ribbon loops as follows. A point $a \in E$ is a \emph{ribbon loop} in $D$ if $a$ is not contained in any feasible set in $\F_{\min}$. If $a$ is a ribbon loop in $D$, $a$ is \emph{nonorientable} if $a$ is also a ribbon loop in $D*a$, and \emph{orientable} otherwise. The following proposition gives the correspondence between $F$-orientability and ribbon loop orientability.

\begin{prop} \label{nonorientable} Let $D=(E,\F)$ be a delta-matroid and let $F \in \F$. Let $a \in E$. Then $a$ is a nonorientable ribbon loop in $D*F$ if and only if $a$ is $F$-nonorientable.
\end{prop}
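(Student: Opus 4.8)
The plan is to exploit the fact that, because $F$ is feasible in $D$, the empty set is feasible in $D*F$, which collapses the ribbon-loop machinery of \cite{CMNR2014} down to an almost trivial condition. Concretely, since $F \in \F$ we have $\emptyset \in \F(D*F)$, so $\F_{\min}(D*F) = \{\emptyset\}$; as no point lies in $\emptyset$, every $a \in E$ is automatically a ribbon loop in $D*F$. In particular the precondition implicit in the phrase ``$a$ is a ribbon loop in $D*F$'' is satisfied for free, and the property ``$a$ is a \emph{nonorientable} ribbon loop in $D*F$'' reduces to the single assertion ``$a$ is a ribbon loop in $(D*F)*a$'', using that $(D*F)*a = D*(F\,\Delta\,\{a\})$. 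So it suffices to show that $a$ is a ribbon loop in $(D*F)*a$ if and only if $\{a\} \in \F(D*F)$.

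For the direction ($\Leftarrow$), suppose $\{a\} \in \F(D*F)$, i.e., $a$ is $F$-nonorientable. Then $\emptyset = \{a\}\,\Delta\,\{a\} \in \F((D*F)*a)$, so $\F_{\min}((D*F)*a) = \{\emptyset\}$ and, exactly as above, $a$ is a ribbon loop in $(D*F)*a$; hence $a$ is a nonorientable ribbon loop in $D*F$.

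For ($\Rightarrow$) I argue the contrapositive. Suppose $\{a\} \notin \F(D*F)$. Since $\emptyset \in \F(D*F)$, we get $\{a\} = \{a\}\,\Delta\,\emptyset \in \F((D*F)*a)$; on the other hand, $\emptyset \in \F((D*F)*a)$ would force $\{a\} \in \F(D*F)$, which we have excluded. Hence $\emptyset \notin \F((D*F)*a)$ while $\{a\} \in \F((D*F)*a)$, so the minimum feasible-set cardinality of $(D*F)*a$ is $1$ and $\{a\} \in \F_{\min}((D*F)*a)$. As $a \in \{a\}$, the point $a$ lies in a member of $\F_{\min}((D*F)*a)$, so $a$ is not a ribbon loop in $(D*F)*a$, and therefore not a nonorientable ribbon loop in $D*F$. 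Combining the two directions proves the proposition.

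There is no serious obstacle here: the statement is essentially a translation between the ribbon-loop vocabulary of \cite{CMNR2014} and Definition \ref{def:delta_int}. The only points demanding care are (i) recording that the ``ribbon loop in $D*F$'' hypothesis built into ``nonorientable ribbon loop in $D*F$'' holds vacuously, and (ii) correctly tracking which of $D*F$ and $(D*F)*a = D*(F\,\Delta\,\{a\})$ contains $\emptyset$, since that dichotomy is precisely what separates the two cases of the equivalence.
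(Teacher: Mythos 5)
Your proof is correct and takes essentially the same approach as the paper: both rest on the observations that $\emptyset\in\F(D*F)$ makes every point a ribbon loop in $D*F$, that $\{a\}$ is always feasible in $(D*F)*a$, and that $\emptyset\in\F((D*F)*a)$ exactly when $\{a\}\in\F(D*F)$. The only differences are presentational — you run the forward direction in contrapositive form and explicitly record the automatic ribbon-loop precondition that the paper leaves implicit.
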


\begin{proof} Suppose $a$ is a nonorientable ribbon loop in $D*F$. Then $a$ is a ribbon loop in $(D*F)*a$. Since $\emptyset$ is feasible in $D*F$, $\{a\} = \emptyset \Delta \{a\}$ is feasible in $(D*F)*a$. Since $a$ is a ribbon loop in $(D*F)*a$, $\{a\}$ is not a minimum cardinality feasible set of $(D*F)*a$, and thus $\emptyset$ must be feasible in $(D*F)*a$. It follows that there exists $F$ feasible in $D*F$ such that $F \Delta \{a\} = \emptyset$. The only possibility is $F = \{a\}$. Thus, $a$ is $F$-nonorientable. On the other hand, suppose $a$ is $F$-nonorientable. Then $\{a\} \Delta \{a\} = \emptyset$ is feasible in $(D*F)*a$. Thus, $a$ is a ribbon loop in $(D*F)*a$, i.e. $a$ is a nonorientable ribbon loop in $D*F$.
\end{proof}

Using the language of $F$-nonorientable points and $F$-pairs, we define interlacement in delta-matroids analogously to the characterization for ribbon graphs provided in Theorem \ref{thm:ribbonchar}. 

\begin{definition} \label{thm:pairs} Let $D = (E,\F)$ be a delta-matroid, let $F \in \F$, and let $a,b \in E$. We say $a$ and $b$ are \emph{$F$-interlaced} if: (1) at most one of $a$ or $b$ is $F$-nonorientable and $\{a,b\}$ is $F$-paired, or (2) both $a$ and $b$ are $F$-nonorientable and $\{a,b\}$ is $F$-separated. The set of points $F$-interlaced with $a$ is $I(F;a)$.
\end{definition}

Interlacement in delta-matroids can also be defined in terms of delta-matroid connectivity.

\begin{theorem} Let $D = (E,\mathcal{F})$ be a delta-matroid. Let $F \in \mathcal{F}$ and let $a,b \in E$. Then $a$ and $b$ are $F$-interlaced if and only if $(D*F)|\{a,b\}$ is connected and nontrivial (i.e. has a feasible set other than $\emptyset$).
\end{theorem}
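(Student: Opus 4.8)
The plan is to reduce the statement to a finite check on delta-matroids with a two-element ground set. First I would record the following normalization: since $F \in \mathcal{F}$, we have $\emptyset \in \mathcal{F}(D*F)$, so $D*F$ has no coloops and, by the observation following the definition of restriction, $\mathcal{F}((D*F)|\{a,b\}) = \{G \in \mathcal{F}(D*F) : G \subseteq \{a,b\}\}$. Thus $(D*F)|\{a,b\}$ is a delta-matroid on ground set $\{a,b\}$ whose collection of feasible sets $\mathcal{F}'$ is a subset of $\{\emptyset, \{a\}, \{b\}, \{a,b\}\}$ containing $\emptyset$. Unwinding Definition \ref{def:delta_int} gives a dictionary: $\{a\} \in \mathcal{F}'$ iff $a$ is $F$-nonorientable, $\{b\} \in \mathcal{F}'$ iff $b$ is $F$-nonorientable, and $\{a,b\} \in \mathcal{F}'$ iff $\{a,b\}$ is $F$-paired. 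Hence the Boolean data $(p,q,r) := ([\,\{a\}\in\mathcal{F}'\,],[\,\{b\}\in\mathcal{F}'\,],[\,\{a,b\}\in\mathcal{F}'\,])$ records exactly the $F$-orientability of $a$, the $F$-orientability of $b$, and the $F$-pairing status of $\{a,b\}$, and there are at most eight cases.

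Next I would determine, for each of these eight values of $(p,q,r)$, whether $(\{a,b\},\mathcal{F}')$ is connected and nontrivial. Since the only partition of $\{a,b\}$ into two nonempty blocks is $\{a\}\sqcup\{b\}$, the delta-matroid $(\{a,b\},\mathcal{F}')$ is disconnected precisely when $\mathcal{F}' = \{X \cup Y : X \in \mathcal{F}'_a,\ Y \in \mathcal{F}'_b\}$, where $\mathcal{F}'_a = \{X \cap \{a\} : X \in \mathcal{F}'\}$ and $\mathcal{F}'_b = \{X \cap \{b\} : X \in \mathcal{F}'\}$; as $\mathcal{F}'$ is always contained in this product of projections, the condition is equivalently $|\mathcal{F}'| = |\mathcal{F}'_a|\cdot|\mathcal{F}'_b|$. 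Running through the eight cases, the connected ones turn out to be exactly $(p,q,r) \in \{(1,1,0),(0,0,1),(1,0,1),(0,1,1)\}$, each of which is automatically nontrivial, while $(0,0,0),(1,0,0),(0,1,0),(1,1,1)$ are either disconnected or trivial. Note the symmetric exchange axiom need not be checked by hand in any case, since $(D*F)|\{a,b\}$ is a restriction of a delta-matroid and hence automatically one.

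Finally I would match this against Definition \ref{thm:pairs}. Case $(1,1,0)$ --- both points $F$-nonorientable, $\{a,b\}$ $F$-separated --- is exactly clause (2), and cases $(0,0,1),(1,0,1),(0,1,1)$ --- at most one of $a,b$ is $F$-nonorientable and $\{a,b\}$ is $F$-paired --- are exactly clause (1). Conversely, $(0,0,0)$ and $(1,0,0),(0,1,0)$ fail clause (1) (not $F$-paired) and clause (2) (not both $F$-nonorientable), while $(1,1,1)$ fails clause (1) (both $F$-nonorientable) and clause (2) ($F$-paired, not $F$-separated). This shows $a$ and $b$ are $F$-interlaced if and only if $(D*F)|\{a,b\}$ is connected and nontrivial.

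I expect the only genuine point of care to be the connectedness analysis in the middle step: one must adopt the convention that a disconnection $D = D_1 \oplus D_2$ has nonempty ground sets $E_1, E_2$ (otherwise every delta-matroid is ``disconnected'' via the block $(\emptyset,\{\emptyset\})$ and the notion is vacuous), and then check in each of the eight cases whether the unique candidate decomposition over $\{a\}\sqcup\{b\}$ actually reproduces $\mathcal{F}'$. Everything else is the dictionary of Definition \ref{def:delta_int} together with a short case enumeration.
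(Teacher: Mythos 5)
Your proposal is correct and follows essentially the same route as the paper: both arguments reduce to classifying the two-element delta-matroid $(D*F)|\{a,b\}$ (using that $\emptyset$ is feasible and that feasibility of $\{a\}$, $\{b\}$, $\{a,b\}$ encodes orientability and pairing) and then checking which of the resulting feasible-set collections are connected and nontrivial, arriving at the same four connected cases. Your exhaustive eight-case enumeration with the projection criterion for disconnectedness is simply a more systematic organization of the paper's case analysis.
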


\begin{proof} Suppose $D' := (D*F)|\{a,b\}$ is connected and nontrivial. Recall that since $F \in \mathcal{F}$, $\emptyset \in \mathcal{F}(D')$. We consider two cases. Suppose $\{a,b\} \in \mathcal{F}(D')$. Then at most one of $\{a\}$ or $\{b\}$ can be in $\mathcal{F}(D')$, since
\begin{equation*}
    (\{a,b\}, \{ \{a,b\}, \{a\}, \{b\}, \emptyset\}) = (\{a\}, \{ \{a\}, \emptyset\}) \oplus (\{b\}, \{ \{b\}, \emptyset \}).
\end{equation*}
Now suppose $\{a,b\} \not \in \mathcal{F}(D').$ Then since $D'$ is nontrivial,
\begin{equation*}
    (\{a,b\}, \{ \{a\}, \emptyset \}) = ( \{a\}, \{ \{a\}, \emptyset \}) \oplus (\{b\}, \{ \emptyset \}), \text{ and }
\end{equation*}
\begin{equation*}
    (\{a,b\}, \{ \{b\}, \emptyset \}) = ( \{a\}, \{ \emptyset \}) \oplus (\{b\}, \{ \{b\}, \emptyset \}),
\end{equation*}
both $\{a\}$ and $\{b\}$ are feasible in $D'$. Thus, $D'$ is one of the following delta-matroids:
\begin{enumerate}
    \item $( \{a,b\}, \{ \{a,b\}, \emptyset\})$,
    \item $( \{a,b\}, \{\{a,b\},\{a\}, \emptyset \})$,
    \item $( \{a,b\}, \{ \{a,b\},\{b\}, \emptyset \})$, or
    \item $( \{a,b\}, \{ \{a\}, \{b\}, \emptyset \})$.
\end{enumerate}

Therefore, $a$ and $b$ are $F$-interlaced. 

For the reverse implication, note that if $a$ and $b$ are $F$-interlaced, $(D*F)|\{a,b\}$ is one of the four delta-matroids above. It is  not difficult to show that each of these delta-matroids is connected and nontrivial, completing the proof.
\end{proof}

We can now give the definition of the fundamental graphs of a delta-matroid from \cite{B01,Geelen} in the language of interlacement. 

\begin{definition} Let $D = (E,\F)$ be a delta-matroid. Let $F \in \F$. The \emph{fundamental graph of $D$ with respect to $F$}, denoted $\Gamma(D,F)$, is the graph with vertex set $E$ and two vertices adjacent when they are $F$-interlaced. In this setting, $I(F;a)$ is the open neighborhood of $a$ in $\Gamma(D,F)$.
\end{definition}

\subsection{Feasible-set activities.} Motivated by the ribbon-graph activities of \cite{CKS2011,VT2011,B2012,De}, we use interlacement in delta-matroids to generalize matroid basis activities to delta-matroid feasible-set activites.

\begin{definition}
Let $D=(E,\F)$ be a delta-matroid and let $F \in \F$. Let $<$ be a total order on $E$.  A point $a \in E$ is \emph{active with respect to $F$} if it is the lowest-ordered point in $I(F;a)$, i.e. $a$ is active if $a$ is not $F$-interlaced with any lower-ordered points. A point that is not active is said to be \emph{inactive}. We say $a$ is \emph{internal} if $a \in F$ and \emph{external} otherwise. 
\end{definition}

It is not necessarily obvious that, for a matroid, this definition corresponds with the usual definition of matroid activity, which we now recall. Let $M = (E,\B)$ be a matroid described by its bases with $<$ a total order on $E$. Let $B$ be a basis of $M$ and let $a \in E$. Suppose $a \not \in B$. Then there is a unique circuit $C(B;a)$ in $B \cup a$ called a \emph{fundamental circuit}. Then recall that $a$ is said to be \emph{externally active} if it is the least element of the fundamental circuit $C(B;a)$, and \emph{externally inactive} otherwise. Suppose $a \in B$. Then $a$ is \emph{internally active} if $a$ is \emph{externally active} with respect to $E \setminus B$ in $M*E$ and \emph{internally inactive} otherwise.

\begin{prop} \label{cor:circuitinter} Let $M = (E,\B)$ be a matroid. Let $B \in \B$ and let $a \not \in B$. Then $I(B;a) = C(B;a)$.
\end{prop}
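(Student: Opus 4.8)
The plan is to unwind both sides of the claimed equality $I(B;a) = C(B;a)$ in terms of the twist $D \ast B$ of the matroid $M$, and then to invoke the structure of matroid circuits. First I would fix $a \notin B$ and recall that since $B$ is a basis of $M = (E,\B)$, we have $\emptyset \in \F(M \ast B)$, so feasibility in $M \ast B$ is inherited by restrictions: for any $A \subseteq E$, $\F((M \ast B)|A) = \{F \in \F(M\ast B) : F \subseteq A\}$. Note also that for a matroid all bases have the same size, so no feasible set of $M$ has cardinality between $|B|-1$ and $|B|+1$ other than at those extremes; this forces the twist $M \ast B$ to have no feasible set of size $1$, i.e. every point is $B$-orientable. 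Consequently the definition of $F$-interlacement collapses to its first clause: for $b \neq a$, $a$ and $b$ are $B$-interlaced if and only if $\{a,b\}$ is $B$-paired, i.e. $\{a,b\} \in \F(M \ast B)$, i.e. $B \Delta \{a,b\} \in \B$.

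Next I would translate this into classical matroid language. Since $a \notin B$, for $b \in B$ the set $B \Delta \{a,b\} = (B \cup \{a\}) \setminus \{b\}$, so $b \in I(B;a)$ iff $(B \cup a) - b$ is a basis of $M$; and for $b \notin B \cup \{a\}$ the set $B \Delta \{a,b\} = B \cup \{a,b\}$ has cardinality $|B|+2 > r(M)$, hence cannot be a basis, so such $b$ is never in $I(B;a)$. Therefore $I(B;a) \subseteq B \cup \{a\}$, and the heart of the matter is the well-known basis-exchange characterization of the fundamental circuit: for $b \in B$, the set $(B\cup a) - b$ is a basis of $M$ if and only if $b \in C(B;a)$. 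I would cite this as a standard fact (e.g. from Oxley), or prove it quickly from the circuit axioms and the fact that $C(B;a)$ is the unique circuit contained in $B \cup a$: deleting $b$ from $B \cup a$ destroys all circuits precisely when $b$ lies on the unique one. The one remaining point is the element $a$ itself: $a \in C(B;a)$ always (a fundamental circuit contains the added element), and $a \in I(B;a)$ would require $\{a,a\} = \{a\}$ to be $B$-paired, but $\{a\}$ has size $1$ and we just observed $M\ast B$ has no singleton feasible sets — so I must double-check the convention of how $I(F;a)$ treats $a$ itself. Reading Definition~\ref{thm:pairs}, the set $I(F;a)$ is the open neighborhood of $a$ in $\Gamma(D,F)$, hence excludes $a$; correspondingly I should interpret $C(B;a)$ here as the fundamental circuit, which by the usual convention does contain $a$ — so strictly the statement should read $I(B;a) = C(B;a) \setminus \{a\}$, or the fundamental circuit should be understood without the added element. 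I would resolve this by adopting whichever convention the paper uses consistently and noting the (at most one-element) discrepancy explicitly if needed.

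Assembling the pieces: for $b \in E \setminus \{a\}$, we have $b \in I(B;a) \iff \{a,b\} \in \F(M \ast B) \iff B \Delta \{a,b\} \in \B$, and this holds only if $b \in B$ (as sets of size $|B|+2$ are too large), in which case it is equivalent to $(B \cup a) - b \in \B$, which by the fundamental-circuit characterization is equivalent to $b \in C(B;a)$. Running this chain of equivalences in both directions gives $I(B;a) = C(B;a)$ (modulo the convention about $a$), which is the proposition.

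The main obstacle I anticipate is not any deep argument but rather pinning down the edge cases cleanly: (i) confirming rigorously that $M \ast B$ has no singleton feasible set so that the $F$-nonorientable clause of Definition~\ref{thm:pairs} is vacuous — this uses equicardinality of matroid bases together with the symmetric exchange axiom, and deserves a sentence of justification; and (ii) reconciling the "$a \in C(B;a)$" convention with "$a \notin I(B;a)$." Everything else reduces to the standard dictionary between twists-of-matroids and basis exchange, plus the textbook property that an element of $B$ lies on the fundamental circuit $C(B;a)$ exactly when swapping it for $a$ yields a basis.
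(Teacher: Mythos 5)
Your argument is correct and follows essentially the same route as the paper's proof: equicardinality of bases makes every point $B$-orientable so that interlacement reduces to $B$-pairing, pairing forces $b \in B$ with $B \Delta \{a,b\} = (B \cup a) - b$ a basis, and the unique-circuit property of $C(B;a)$ gives the equivalence with membership in the fundamental circuit (the paper writes out the independence contradiction you sketch). The convention issue you flag is real but harmless — strictly $I(B;a) = C(B;a) \setminus \{a\}$ since $I(B;a)$ is an open neighborhood while $a \in C(B;a)$, and the paper silently elides this; it does not affect the subsequent activity correspondence, since being least in $C(B;a)$ is equivalent to preceding every element of $C(B;a) \setminus \{a\}$.
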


\begin{proof} Since $M$ is a matroid, all feasible sets have the same size, and so $a$ is $B$-orientable. Thus, $I(B;a)$ is precisely the set of points with which $a$ is $B$-paired. Suppose $b$ is $B$-paired with $a$. Then there exists $B' \in \B$ such that $\{a,b\} = B \Delta B'$.  Again, since $M$ is a matroid, $b \in B$ and $B \setminus \{b\} \subseteq B'$. Suppose $b \not \in C(B;a)$. Then $C(B;a) \subseteq (B \setminus \{b\}) \cup \{a\} = B'$. But this latter set is independent, a contradiction. Thus, $b \in C(B;a)$.

Now suppose $b \in C(B;a)$. Since $C(B;a)$ is the unique circuit in $B \cup \{a\}$, $B' = (B \setminus \{b\}) \cup \{a\}$ is a basis for $M$ for which $B \Delta B' = \{a,b\}$. Thus, $b$ is $B$-paired with $a$, which, since $M$ is a matroid, implies that $a$ and $b$ are $B$-interlaced.
\end{proof}

We conclude in Theorem \ref{thm:mat-delt} that the usual definition of activity for matroids corresponds with the delta-matroidal definition.

\begin{theorem} \label{thm:mat-delt} Let $M = (E,\B)$ be a matroid and let $<$ be a total order on $E$. Let $B \in \B$. Let $b \in E$. Then $b$ is externally active (thinking of $M$ as a matroid) if and only if $b$ is external and active (thinking of $M$ as a delta-matroid.) Dually, $b$ is internally active if and only if $b$ is internal and active.
\end{theorem}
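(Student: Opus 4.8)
The plan is to reduce everything to Proposition \ref{cor:circuitinter} and a single twisting identity. First I would dispatch the external half. Suppose $b \notin B$; this is exactly the condition that $b$ be external with respect to the feasible set $B$ in the delta-matroidal sense. By Proposition \ref{cor:circuitinter} we have $I(B;b) = C(B;b)$, the fundamental circuit of $b$ with respect to $B$. Unwinding the delta-matroidal definition, $b$ is active with respect to $B$ precisely when $b$ is the lowest-ordered element of $I(B;b)$, i.e. when no element of $C(B;b)$ lies strictly below $b$, i.e. when $b$ is the least element of $C(B;b)$. That is verbatim the matroidal definition of external activity, so the external equivalence falls out immediately.

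For the internal half I would set up a reduction to the external half applied to $M*E$. Begin by observing that $M*E$ is again a matroid --- its feasible sets are the complements of the bases of $M$, all of the same cardinality (in fact $M*E$ is the dual matroid $M^{*}$) --- and that $E \setminus B$ is one of its bases. Since $b \in B$ iff $b \notin E\setminus B$, the point $b$ is internal for $(M,B)$ exactly when it is external for $(M*E,\, E\setminus B)$. The crux is to show that the two relevant interlacement structures coincide: the relation ``$a$ and $c$ are $B$-interlaced in $M$'' is identical to ``$a$ and $c$ are $(E\setminus B)$-interlaced in $M*E$'', and hence $I_{M}(B;b) = I_{M*E}(E\setminus B;b)$. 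This follows because, by Definition \ref{thm:pairs}, $F$-interlacement in a delta-matroid $D$ is defined purely in terms of feasibility of singletons and pairs in the twist $D*F$, and here $(M*E)*(E\setminus B) = M*\bigl(E \,\Delta\, (E\setminus B)\bigr) = M*B$; so the twisted delta-matroid --- and therefore the interlacement relation, and the notion of activity of $b$ --- is literally the same object in both settings.

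With these two ingredients in hand, the internal equivalence is a short chain of rewrites. By definition $b$ is internally active in the matroid $M$ iff $b$ is externally active, in the matroid $M*E$, with respect to the basis $E\setminus B$; by the already-proved external equivalence applied to $M*E$ (valid because $b \notin E\setminus B$) this holds iff $b$ is external and active with respect to $E\setminus B$ in $M*E$ viewed as a delta-matroid; by the interlacement identity this last occurrence of ``active'' coincides with ``active with respect to $B$ in $M$'', while ``$b$ external for $E\setminus B$'' is the same as ``$b$ internal for $B$''. Composing the equivalences gives the claim. I expect the only step needing genuine care to be the twisting identity $(M*E)*(E\setminus B) = M*B$ together with the attendant remark that $F$-interlacement depends on $D$ and $F$ only through $D*F$; the remainder is bookkeeping with the definitions, and I anticipate no real obstacle.
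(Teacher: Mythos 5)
Your proposal is correct and follows essentially the same route as the paper: the external half is handled verbatim via Proposition \ref{cor:circuitinter}, and the internal half is the duality reduction to $M*E$ that the paper compresses into the word ``dually.'' Your explicit justification of that reduction --- the identity $(M*E)*(E\setminus B) = M*B$ together with the observation that $F$-interlacement depends on $(D,F)$ only through $D*F$ --- is sound and in fact supplies detail the paper's proof omits.
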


\begin{proof} Suppose $b$ is externally active. Then $b \not \in B$ and $b$ is the lowest-ordered element of $C(B;b)$. Thus, $b \not \in B$ and $b$ is the lowest-ordered element of $I(B;b)$. Hence, $b$ is external and active. The reverse direction follows similarly.
\end{proof}

\section{Feasible set expansion of the transition polynomial}

In this section we use the delta-matroid activities defined above to give feasible-set expansions for delta-matroid polynomials. We will begin by computing a general feasible-set expansion for a family of transition polynomials defined by Brijder and Hoogeboom in \cite{BH2014}.

\begin{definition} (transition polynomial) \cite{BH2014} Let $D = (E,\mathcal{F})$ be a delta-matroid. An \emph{ordered 3-partition of $E$} is an ordered triple $(E_1,E_2,E_3)$ of subsets of $E$ such that $E_i \cap E_j = \emptyset$ for $i \neq j$ and $E = \bigcup E_i$. Let $P_3$ be the set of all ordered 3-partitions of $E$. The \emph{transition polynomial} $Q_{(w,x,z)}(D;y)$ is
\begin{equation*}
    Q_{(w,x,z)}(D;y) = \sum_{(A,B,C) \in P_3} w^{|A|} x^{|B|} z^{|C|} y^{r_{\min}(D * B)}.
\end{equation*}
\end{definition}

The transition polynomials $Q_{(w,x,0)}(D;y)$ are of particular interest, as they include several well-studied delta-matroid polynomials (such as the interlace polynomials and the Bollob\'{a}s-Riordan polynomial). Moreover, Brijder and Hoogeboom have shown that $Q_{(w,x,0)}(D;y)$ satisfies the following Tutte-like deletion-contraction recurrence, recalling that a point $a \in E$ is nonsingular if $a$ is neither a loop nor a coloop.

\begin{theorem} \cite{BH2014} \label{thm:transdc} Let $D = (E,\mathcal{F})$ be a delta-matroid, and let $a \in E$ be nonsingular. Then,
\begin{equation}
    Q_{(w,x,0)}(D;y) = w Q_{(w,x,0)}(D \setminus a;y) + x Q_{(w,x,0)}( D / a;y).
\end{equation}
If every element of $D$ is singular and $D$ has $c$ coloops and $l$ loops, then
\begin{equation}
    Q_{(w,x,0)}(D;y) = (x + wy)^c (w + xy)^l.
\end{equation}
\end{theorem}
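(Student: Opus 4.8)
The plan is to work directly from the definition of $Q_{(w,x,0)}$ and reduce everything to a single combinatorial distance function. First I would observe that setting $z = 0$ kills every ordered $3$-partition with $C \neq \emptyset$ (using $0^0 = 1$), so that
\begin{equation*}
    Q_{(w,x,0)}(D;y) = \sum_{B \subseteq E} w^{|E| - |B|} x^{|B|} y^{\rho_D(B)}, \qquad \rho_D(B) := r_{\min}(D * B) = \min_{F \in \mathcal{F}} |B \Delta F|,
\end{equation*}
the last equality holding because the feasible sets of $D * B$ are exactly the sets $B \Delta F$ and $r_{\min}$ records the minimum feasible cardinality. Thus $\rho_D(B)$ is the symmetric-difference distance from $B$ to $\mathcal{F}$, and the whole identity becomes a statement about these distances.

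Next, fix a nonsingular $a$ and abbreviate, for $B' \subseteq E - a$,
\begin{equation*}
    p := \rho_{D \setminus a}(B') = \min_{F \in \mathcal{F},\, a \notin F} |B' \Delta F|, \qquad q := \rho_{D / a}(B') = \min_{F \in \mathcal{F},\, a \in F} |B' \Delta (F - a)|,
\end{equation*}
which are well defined because $a$ nonsingular guarantees feasible sets both avoiding and containing $a$ (so $D\setminus a$ and $D/a$ are genuine delta-matroids). Splitting $\min_{F}|B \Delta F|$ according to whether $a \in F$, and using $a \notin B'$, I would record the two identities $\rho_D(B') = \min(p, q+1)$ and $\rho_D(B' \cup a) = \min(q, p+1)$: placing $a$ on the ``wrong side'' of $B$ costs exactly one extra element of symmetric difference.

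The crux --- and the step I expect to be the main obstacle --- is the lemma $|p - q| \le 1$, which is exactly where the symmetric exchange axiom is needed. To prove $q \le p+1$, I would take minimizers $F_0 \not\ni a$ (achieving $p$) and $F_1 \ni a$; since $a \in F_0 \Delta F_1$, the axiom produces $b \in F_0 \Delta F_1$ with $F_0 \Delta \{a,b\} \in \mathcal{F}$. This feasible set contains $a$ and satisfies $(F_0 \Delta \{a,b\}) - a = F_0 \Delta \{b\}$, a competitor for $q$ of cost at most $p+1$; the degenerate case $b = a$ gives $F_0 \cup a$ and cost exactly $p$. The inequality $p \le q+1$ is symmetric, applying the axiom to $F_1$ relative to $F_0$ in order to delete $a$. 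The only care required is the bookkeeping of these symmetric differences and the $b = a$ case.

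Finally, I would assemble the recurrence term by term. Grouping the subsets in the defining sum into the pair $(B', B' \cup a)$ for each $B' \subseteq E - a$, the lemma forces $\min(p, q+1) = p$ and $\min(q, p+1) = q$, so the $B'$-term contributes $w^{|E|-|B'|}x^{|B'|}y^{p}$ and the $(B'\cup a)$-term contributes $w^{|E|-1-|B'|}x^{|B'|+1}y^{q}$; these are precisely the $B'$-summands of $w\,Q_{(w,x,0)}(D\setminus a;y)$ and $x\,Q_{(w,x,0)}(D/a;y)$, respectively, which proves the recurrence. For the base case, if every element is singular then every feasible set contains all $c$ coloops and avoids all $l$ loops, so $\mathcal{F} = \{C_0\}$ with $C_0$ the set of coloops, whence $\rho_D(B) = |B \Delta C_0|$. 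Writing $B = B_c \sqcup B_l$ along the partition of $E$ into coloops and loops gives $\rho_D(B) = (c - |B_c|) + |B_l|$, and the sum factors as
\begin{equation*}
    \Bigl(\sum_{B_c \subseteq C_0} w^{c-|B_c|}x^{|B_c|}y^{c-|B_c|}\Bigr)\Bigl(\sum_{B_l} w^{l-|B_l|}x^{|B_l|}y^{|B_l|}\Bigr) = (x + wy)^c (w + xy)^l
\end{equation*}
by the binomial theorem, completing the proof.
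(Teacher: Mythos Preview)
Your proof is correct. Note, however, that the paper does not supply its own proof of this theorem: it is quoted from Brijder and Hoogeboom \cite{BH2014} and used as a black box (the first place it is invoked is Lemma~\ref{lem:leaves}). So there is no in-paper argument to compare against.

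That said, your approach is clean and self-contained. The key observation that $r_{\min}(D*B)=\min_{F\in\mathcal F}|B\Delta F|$ turns the problem into one about symmetric-difference distance to $\mathcal F$, and the only place the delta-matroid axioms enter is exactly your lemma $|p-q|\le 1$, which you derive correctly from symmetric exchange (including the $b=a$ case). The identities $\rho_D(B')=\min(p,q+1)$ and $\rho_D(B'\cup a)=\min(q,p+1)$ then collapse to $p$ and $q$ respectively, and the termwise matching with $w\,Q(D\setminus a)+x\,Q(D/a)$ is immediate. The base case is handled correctly: when every element is singular $\mathcal F$ is the singleton $\{C_0\}$, and your factorisation via $B=B_c\sqcup B_l$ gives $(x+wy)^c(w+xy)^l$ by the binomial theorem. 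One small remark: in the step $q\le p+1$ you only need the existence of \emph{some} $F_1\ni a$ to invoke the exchange axiom; you do not use that $F_1$ is a minimiser, and your write-up reflects this.
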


To obtain a feasible-set expansion of $Q_{(w,x,0)}(D;y)$, we begin by computing the polynomial using a tree of minors of $D$ analogous to the deletion-contraction computation tree for the Tutte polynomial. Note that the tree constructed in Definition \ref{def:tree} generalizes the tree of partial resolutions considered in \cite{CKS2011,VT2011,B2012,De} for computing spanning quasi-tree expansions of ribbon-graph polynomials. We define it in terms of deletions and contractions instead of partial resolutions to make explicit the connections to the theory of deletion-contraction recurrences in matroids and graph polynomials.

\begin{definition} \label{def:tree} Let $D = (E,\mathcal{F})$ be a delta-matroid and let $<$ be a total order on $E$. We inductively construct a binary tree $\mathcal{T}(D)$, whose nodes are certain minors of $D$. The root of $\mathcal{T}(D)$ is $D$. Suppose a node $D'$ has been added to $\mathcal{T}(D)$. If every point of $D'$ is singular, then $D'$ is given no children (i.e. $D'$ will be a leaf of $\mathcal{T}(D))$. Otherwise, let $a \in E(D')$ be the highest ordered nonsingular point of $D'$. The two children of $D'$ will be $D' \setminus a$ and $D' / a$. 
\end{definition}

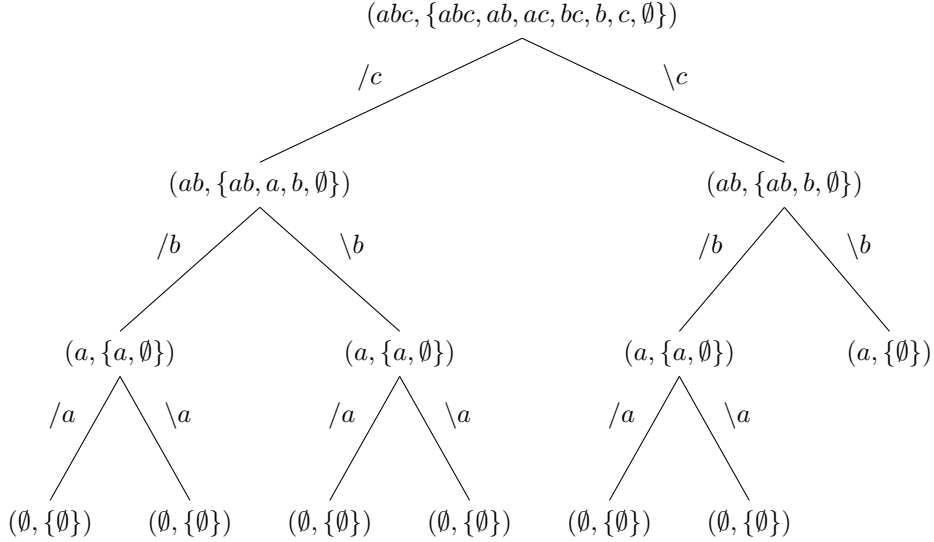
\begin{figure}
\centering
\begin{tikzpicture}
\tikzset{sibling distance=14pt,level distance=64pt}
\Tree [.{$(abc,\{abc,ab,ac,bc,b,c,\emptyset\})$} \edge node[auto=right]{$/c$}; [.{$(ab,\{ab,a,b,\emptyset \})$} 
\edge node[auto=right]{$/b$};[.{$(a, \{a,\emptyset\})$} \edge node[auto=right]{$/a$}; {$(\emptyset,\{\emptyset\})$} \edge node[auto=left]{$\setminus a$}; {$(\emptyset,\{\emptyset\})$} ] \edge node[auto=left]{$\setminus b$};[.{$(a, \{a,\emptyset\})$} \edge node[auto=right]{$/a$}; {$(\emptyset,\{\emptyset\})$} \edge node[auto=left]{$\setminus a$}; {$(\emptyset,\{\emptyset\})$} ] ]
\edge node[auto=left]{$\setminus c$};[.{$(ab,\{ab,b,\emptyset\})$} \edge node[auto=right]{$/b$}; [.{$(a,\{a,\emptyset\})$} \edge node[auto=right]{$/a$}; {$(\emptyset,\{\emptyset\})$} \edge node[auto=left]{$\setminus a$}; {$(\emptyset,\{\emptyset\})$} ] \edge node[auto=left]{$\setminus b$}; {$(a,\{\emptyset\})$} ] ]
\end{tikzpicture}
\caption{The computation of $\mathcal{T}(D)$ for the delta-matroid $D$ with ground-set $\{a,b,c\}$ under the total order $a < b < c$ and collection of feasible sets $\mathcal{P}(\{a,b,c\}) \setminus \{a\}$. Note that this delta-matroid is not ribbon-graphic (nor belongs to the larger class of ``vf-safe'' delta-matroids) \cite{2016arXiv160201306C}. For convenience, we eliminate brackets and commas in set notation wherever possible, so that, for example, $(ab,\{ab,b,\emptyset\})$ is the delta-matroid with ground-set $\{a,b\}$ and collection of feasible sets $\{ \{a,b\}, \{b\}, \emptyset\}.$ }
\end{figure}

Denote by $\mathcal{L}(D)$ the set of leaves of $\mathcal{T}(D)$. Let $D'$ be a node of $\mathcal{T}(D)$. Let $P$ be the unique path from $D$ to $D'$ in $\mathcal{T}(D)$. Each edge of $P$ corresponds to either deleting a point or contracting a point. Let $D'_c$ be the set of points contracted when following $P$ and let $D'_d$ be the set of points deleted when following $P$. If $D'$ is a leaf, then every point of $D'$ is either a loop or a coloop. Let $D'_{\text{co}}$ be the set of coloops of $D'$ and $D'_{\text{lo}}$ be the set of loops of $D'$.

Note that we obtain the following expression for $Q_{(w,x,0)}(D;y)$ in terms of $\mathcal{L}(D)$.

\begin{lemma} \label{lem:leaves} Let $D = (E,\mathcal{F})$ be a delta-matroid. Let $<$ be a total order on $E$. Then
\begin{align}
    Q_{(w,x,0)}(D;y) &= \sum_{L \in \mathcal{L}(D)} w^{|L_d|} x^{|L_c|} (x + wy)^{|L_{\text{co}}|} (w + xy)^{|L_{\text{lo}}|}. \label{eq:eeee}
\end{align}
\end{lemma}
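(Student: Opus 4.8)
The plan is to prove this by induction on $|E|$, tracking how the transition polynomial decomposes along the computation tree $\mathcal{T}(D)$. The base case is when every point of $D$ is singular: then $\mathcal{T}(D)$ consists of the single leaf $D$ itself, so $|L_d| = |L_c| = 0$, $L_{\mathrm{co}}$ is the set of coloops and $L_{\mathrm{lo}}$ the set of loops, and the right-hand side of \eqref{eq:eeee} reads $(x+wy)^c(w+xy)^l$, which is exactly the value given by the second equation of Theorem \ref{thm:transdc}.

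For the inductive step, suppose $D$ has a nonsingular point, and let $a$ be the highest-ordered nonsingular point, so that the children of the root of $\mathcal{T}(D)$ are $D \setminus a$ and $D/a$. The key structural observation is that $\mathcal{T}(D)$, with its root removed, is precisely the disjoint union of $\mathcal{T}(D\setminus a)$ and $\mathcal{T}(D/a)$ (this follows immediately from Definition \ref{def:tree}, since the construction of the subtree below a node depends only on that node). Hence $\mathcal{L}(D) = \mathcal{L}(D\setminus a) \sqcup \mathcal{L}(D/a)$. Moreover, for a leaf $L$ reached through the child $D\setminus a$, the path from $D$ to $L$ begins with the deletion of $a$ and then continues exactly as the path from $D\setminus a$ to $L$ inside $\mathcal{T}(D\setminus a)$; consequently $L_d = (L_d')\cup\{a\}$ where $L_d'$ is the deletion-set computed in $\mathcal{T}(D\setminus a)$, while $L_c$, $L_{\mathrm{co}}$, $L_{\mathrm{lo}}$ are unchanged. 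Symmetrically, for a leaf reached through $D/a$, we pick up an extra contracted point $a$ and $L_c = (L_c')\cup\{a\}$.

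Putting these together, I would write
\begin{align*}
    \sum_{L \in \mathcal{L}(D)} w^{|L_d|} x^{|L_c|} (x+wy)^{|L_{\mathrm{co}}|}(w+xy)^{|L_{\mathrm{lo}}|}
    &= w\!\!\sum_{L \in \mathcal{L}(D\setminus a)}\!\! w^{|L_d|}x^{|L_c|}(x+wy)^{|L_{\mathrm{co}}|}(w+xy)^{|L_{\mathrm{lo}}|}\\
    &\quad + x\!\!\sum_{L \in \mathcal{L}(D/ a)}\!\! w^{|L_d|}x^{|L_c|}(x+wy)^{|L_{\mathrm{co}}|}(w+xy)^{|L_{\mathrm{lo}}|},
\end{align*}
where the factored-out $w$ and $x$ account for the extra deleted (resp. contracted) point $a$. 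By the inductive hypothesis applied to $D\setminus a$ and $D/a$ (both of which have smaller ground set), the two sums equal $Q_{(w,x,0)}(D\setminus a;y)$ and $Q_{(w,x,0)}(D/a;y)$ respectively, and then the first equation of Theorem \ref{thm:transdc}, which applies because $a$ is nonsingular, gives $w\,Q_{(w,x,0)}(D\setminus a;y) + x\,Q_{(w,x,0)}(D/a;y) = Q_{(w,x,0)}(D;y)$, completing the induction.

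The main obstacle is bookkeeping rather than anything deep: one must be careful that the exponents $|L_d|, |L_c|, |L_{\mathrm{co}}|, |L_{\mathrm{lo}}|$ are defined relative to the tree $\mathcal{T}(D)$ in which $L$ sits, and verify cleanly that passing to the subtrees $\mathcal{T}(D\setminus a)$ and $\mathcal{T}(D/a)$ changes only $|L_d|$ (resp. $|L_c|$) and only by one. A secondary point worth a sentence is that $D\setminus a$ and $D/a$ are genuinely delta-matroids (so the inductive hypothesis applies) and that since $a$ is nonsingular the subtlety in Definition \ref{def:tree} about loops/coloops being deleted-or-contracted does not interfere with identifying $a$ as the point removed at the root.
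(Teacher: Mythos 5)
Your proof is correct and is essentially the paper's argument: the paper simply states that the lemma ``follows directly from Theorem \ref{thm:transdc},'' and your induction on $|E|$ (splitting $\mathcal{T}(D)$ at the root into $\mathcal{T}(D\setminus a)$ and $\mathcal{T}(D/a)$ and applying the deletion--contraction recurrence) is exactly the standard way to make that one-line justification precise.
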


\begin{proof} This follows directly from Theorem \ref{thm:transdc}.
\end{proof}

In the remainder of this section, we will compute the exponents in Equation \ref{eq:eeee} in terms of activities with respect to feasible sets, thereby obtaining a feasible-set expansion of $Q_{(w,x,0)}(D;y)$. First, note that we can characterize feasible sets in nodes of $\mathcal{T}(D)$.

\begin{lemma} \label{lem:feasiblenodes} Let $D = (E,\mathcal{F})$ be a delta-matroid and let $<$ be a total order on $E$. Let $D'$ be a node of $\mathcal{T}(D)$. A set $F' \subseteq E(D')$ is feasible in $D'$ if and only if $F = F' \cup D'_c$ is feasible in $D$.
\end{lemma}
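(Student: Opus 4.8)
The plan is to prove Lemma \ref{lem:feasiblenodes} by induction on the length of the path $P$ from $D$ to $D'$ in $\mathcal{T}(D)$. The base case, $D' = D$, is immediate since then $D'_c = \emptyset$ and the claim reads ``$F'$ feasible in $D$ iff $F'$ feasible in $D$.'' For the inductive step, I would let $D''$ be the parent of $D'$ in $\mathcal{T}(D)$, so that $D'$ is obtained from $D''$ by either deleting or contracting the highest-ordered nonsingular point $a$ of $D''$. By the inductive hypothesis applied to $D''$, a set $G \subseteq E(D'')$ is feasible in $D''$ if and only if $G \cup D''_c$ is feasible in $D$. So it suffices to reduce to a single deletion/contraction step: I must show that $F' \subseteq E(D')$ is feasible in $D'$ if and only if $F' \cup X$ is feasible in $D''$, where $X = \{a\}$ if the step is a contraction and $X = \emptyset$ if it is a deletion. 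Combining this with the inductive hypothesis (noting $D'_c = D''_c \cup X$) gives the result.

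The single-step claim is essentially just unwinding the definitions of deletion and contraction, using crucially that $a$ is \emph{nonsingular} in $D''$ (which is guaranteed by the construction of $\mathcal{T}(D)$, since only nodes with a nonsingular point are given children, and the point chosen is nonsingular). In the deletion case, $D' = D'' \setminus a$ with $a$ not a coloop, so $\mathcal{F}(D') = \{G \in \mathcal{F}(D'') : a \notin G\}$; since $F' \subseteq E(D'') - a$ we have $a \notin F'$, so $F'$ feasible in $D'$ is by definition equivalent to $F'$ feasible in $D''$, and here $X = \emptyset$. In the contraction case, $D' = D''/a$ with $a$ not a loop, so $\mathcal{F}(D') = \{G - a : G \in \mathcal{F}(D''), a \in G\}$; thus $F'$ feasible in $D'$ means $F' = G - a$ for some $G \in \mathcal{F}(D'')$ with $a \in G$, and since $a \notin F'$ this forces $G = F' \cup \{a\}$, so $F'$ feasible in $D'$ is equivalent to $F' \cup \{a\}$ feasible in $D''$, matching $X = \{a\}$.

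The one point requiring care — the main (minor) obstacle — is bookkeeping the relation $D'_c = D''_c \cup X$ and checking that the union $F' \cup X$ lands in the right ground set and is disjoint from $D''_c$, so that the inductive hypothesis applies cleanly. Since $a \in E(D'')$ but $a \notin D''_c$ (as $a$ is deleted or contracted at the step from $D''$ to $D'$, it had not been removed before), and $D''_c \subseteq E$ is disjoint from $E(D'')$'s complement appropriately, the set $F' \cup X \cup D''_c = F' \cup D'_c$ is a genuine subset of $E$ and everything matches. I would also remark that the choice of which nonsingular point to process at each node is irrelevant to this lemma — only the fact that each processed point is nonsingular in its node matters — and that the ``loop becomes delete, coloop becomes contract'' conventions never interfere because $\mathcal{T}(D)$ only ever processes nonsingular points. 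This completes the induction and hence the proof.
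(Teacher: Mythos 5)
Your proof is correct and is essentially the paper's argument: the paper compresses your induction along the path from $D$ to $D'$ into the single observation that, since only nonsingular points are processed, $\mathcal{F}(D') = \{ F \setminus D'_c : F \in \mathcal{F},\ D'_c \subseteq F,\ F \cap D'_d = \emptyset\}$, from which the lemma follows. Your step-by-step unwinding of the deletion and contraction definitions, with the key role of nonsingularity made explicit, is just a more detailed rendering of the same idea.
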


\begin{proof} Note that since only nonsingular points are deleted or contracted when forming $D'$, the definitions of deletion and contraction imply that
\begin{equation*}
    \mathcal{F}(D') = \{ F \setminus D'_c : F \in \mathcal{F}, D'_c \subseteq F, \text{ and } F \cap D'_d = \emptyset\}.
\end{equation*}
The result follows.
\end{proof}

Lemma \ref{lem:feasiblenodes} motivates the following definition, which we state for arbitrary sets not only for feasible sets.

\begin{definition}
 Let $D = (E,\mathcal{F})$ be a delta-matroid and let $<$ be a total order on $E$. Let $D'$ be a node of $\mathcal{T}(D)$. We say $A \subseteq E(D)$ is \emph{covered} by $D'$ if $D'_c \subseteq A \subseteq D'_c \cup E(D').$
\end{definition}

We can now show that each leaf covers a unique feasible set and each feasible set is covered by a unique leaf, allowing us to rewrite the sum of Equation \ref{eq:eeee} as a sum over feasible sets as follows in Lemma \ref{lem:ffff}.

\begin{lemma} \label{lem:leafcover} Let $D = (E,\mathcal{F})$ be a delta-matroid and let $<$ be a total order on $E$. Every subset of $E$ is covered by exactly one leaf of $\mathcal{T}(D)$.
\end{lemma}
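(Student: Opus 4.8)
The plan is to prove Lemma \ref{lem:leafcover} by induction on the height of the tree $\mathcal{T}(D)$, tracking how a fixed subset $A \subseteq E$ is covered as we descend from the root. First I would observe that the root $D$ covers every subset of $E$, since for the root we have $D_c = \emptyset$ and $E(D) = E$, so the condition $\emptyset \subseteq A \subseteq E$ is vacuous; this gives the base case and also shows that at least one leaf covers $A$ once we argue that coverage is preserved down some branch.

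Next I would establish the key propagation step: if an internal node $D'$ covers $A$ and $a \in E(D')$ is the highest-ordered nonsingular point of $D'$, then $A$ is covered by exactly one of the two children $D' \setminus a$ and $D'/a$. Concretely, unwinding the definitions, $(D'\setminus a)_c = D'_c$ and $E(D'\setminus a) = E(D') - a$, while $(D'/a)_c = D'_c \cup \{a\}$ and $E(D'/a) = E(D') - a$. So $A$ is covered by $D'\setminus a$ iff $D'_c \subseteq A \subseteq D'_c \cup (E(D') - a)$, i.e. iff $a \notin A$ (given that $A$ was covered by $D'$), and $A$ is covered by $D'/a$ iff $D'_c \cup \{a\} \subseteq A \subseteq D'_c \cup E(D')$, i.e. iff $a \in A$ (again given coverage by $D'$). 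Since exactly one of $a \in A$ or $a \notin A$ holds, exactly one child covers $A$. Iterating this from the root, the set of nodes covering $A$ forms a single root-to-leaf path, so $A$ is covered by exactly one leaf: existence comes from following the path to its end (a leaf), and uniqueness comes from the fact that at each branching only one child continues to cover $A$, so two distinct leaves covering $A$ would force a branch node with both children covering $A$, a contradiction.

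I would phrase this cleanly as: at each node $D'$ on the path from the root, either $D'$ is a leaf (and we stop) or we descend to the unique child covering $A$; this process terminates at a leaf because $|E(D')|$ strictly decreases, and that terminal leaf is the unique leaf covering $A$ because any leaf covering $A$ must lie on this path (each of its ancestors covers $A$ by the nesting $D'_c \subseteq (\text{child})_c$ and $E(\text{child}) \subseteq E(D')$, and the path through coverage-covering nodes is forced to be unique by the branching argument). The only mild subtlety — and the step I would be most careful about — is correctly computing $(D'\setminus a)_c$, $(D'/a)_c$ and the ground sets when $a$ is nonsingular, ensuring that the definitions of deletion and contraction genuinely give $(D'/a)_c = D'_c \cup \{a\}$ and that nothing is contracted or deleted "for free"; but since the tree only ever deletes or contracts nonsingular points, Definition \ref{def:tree} makes this bookkeeping routine, and there is no real obstacle.
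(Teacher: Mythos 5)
Your proposal is correct and follows essentially the same route as the paper: the paper builds the root-to-leaf path by branching on whether the highest-ordered nonsingular point $a$ lies in $A$ (contract if $a\in A$, delete otherwise) and gets uniqueness by examining the first node where two candidate paths would diverge, which is exactly your "exactly one child covers $A$" dichotomy together with upward propagation of coverage. Your bookkeeping of $(D'/a)_c = D'_c \cup \{a\}$, $(D'\setminus a)_c = D'_c$, and $E(D'/a)=E(D'\setminus a)=E(D')-a$ is accurate, so no gap.
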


\begin{proof} Let $A \subseteq E$. Then $A$ corresponds to the following unique maximal path $P$ in $\mathcal{T}(D)$. The first node of $P$ is $D$. Suppose we have constructed the first $k$ nodes $D_1,D_2,\ldots,D_k$ of $P$. If $D_k$ is a leaf of $\mathcal{T}(D)$ we are done. Otherwise, let $a$ be  the highest nonsingular point of $D_k$. If $a \in A$, add $D_k / a$ to $P$. Otherwise, add $D_k \setminus a$ to $P$.

Let $L(A)$ be the final node of $P$. By construction, $L(A)_c \subseteq A$ and any points in $A \setminus L(A)_c$ are in $E(L(A))$. Thus, $L(A)$ covers $A$. Suppose some other leaf $L'(A)$ covers $A$. Let $P'$ be the unique path from $D$ to $L'(A)$. Let $D'$ be the highest-level node in $P \cap P'$. Since $L'(A)$ is distinct from $L(A)$, $D'$ is not a leaf. So let $b$ be the highest-ordered nonsingular point of $D'$. The next node of $P$ is $D' / b$ if and only if the next node of $P'$ is $D' \setminus b$. That is, $b \in L(A)_c$ if and only if $b \in E \setminus (L'(A)_c \cup E(L'(A)))$. Thus, $L'(A)$ cannot cover $A$.
\end{proof}

\begin{lemma} \label{lem:onefeasible} Let $D = (E,\mathcal{F})$ be a delta-matroid and let $<$ be a total order on $E$. Let $L$ be a leaf of $\mathcal{T}(D)$. Then there is a unique feasible set $F = L_{\text{co}} \cup L_c$ covered by $L$.
\end{lemma}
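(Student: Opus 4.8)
The plan is to combine Lemma~\ref{lem:leafcover}, which guarantees that $L$ covers a unique subset of $E$, with Lemma~\ref{lem:feasiblenodes}, which translates feasibility in the minor $L$ back to feasibility in $D$. First I would recall that, since $L$ is a leaf, every point of $E(L)$ is either a loop or a coloop of $L$; write $E(L) = L_{\text{co}} \sqcup L_{\text{lo}}$. By definition of the coloops and loops of a delta-matroid, a set $F' \subseteq E(L)$ is feasible in $L$ if and only if it contains every coloop and is disjoint from every loop, \emph{provided} $L$ has at least one feasible set with this property; in fact $\mathcal{F}(L) \neq \emptyset$ always, and one checks that the set $L_{\text{co}}$ itself is feasible in $L$ (it is contained in every feasible set since each $L_{\text{co}}$ point is a coloop, and it is disjoint from $L_{\text{lo}}$; symmetric exchange then forces $L_{\text{co}} \in \mathcal{F}(L)$, or more directly: by Theorem~\ref{thm:transdc} applied to $L$ the feasible sets of $L$ are exactly those $F'$ with $L_{\text{co}} \subseteq F' \subseteq L_{\text{co}} \cup L_{\text{lo}}$ and $F' \cap L_{\text{lo}} = \emptyset$, i.e. $F' = L_{\text{co}}$). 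So $L_{\text{co}}$ is the unique feasible set of the leaf $L$.

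Next I would push this through Lemma~\ref{lem:feasiblenodes}: a set $F' \subseteq E(L)$ is feasible in $L$ if and only if $F' \cup L_c$ is feasible in $D$. Applying this to the unique feasible set $F' = L_{\text{co}}$ of $L$, we conclude that $F := L_{\text{co}} \cup L_c$ is feasible in $D$, and it is the only feasible set of $D$ of the form $F' \cup L_c$ with $F' \subseteq E(L)$, i.e. the only feasible set covered by $L$ in the sense of the covering definition, since $D'_c \subseteq F \subseteq D'_c \cup E(D')$ means exactly $F = F' \cup L_c$ for some $F' \subseteq E(L)$. Finally I would invoke Lemma~\ref{lem:leafcover} to note that $F$ is indeed covered by $L$ (every subset of $E$ is covered by exactly one leaf, and the bijection sends $F$ to $L(F)$; one verifies $L(F) = L$ because following the path determined by $F$ reproduces the deletion/contraction choices defining $L$, using that $L_c \subseteq F$ and $L_d \cap F = \emptyset$). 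This gives both existence and uniqueness of the feasible set covered by $L$.

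The main obstacle I anticipate is the bookkeeping needed to justify that the unique feasible set of the leaf $L$ is precisely $L_{\text{co}}$ — one must be careful that $L$ is a genuine delta-matroid (nonempty $\mathcal{F}$) all of whose points are singular, and then argue that a delta-matroid with only loops and coloops has a single feasible set consisting of exactly its coloops. This follows cleanly from the second equation of Theorem~\ref{thm:transdc} (the all-singular case), or can be argued directly from the definitions of loop and coloop together with one application of the symmetric exchange axiom; everything after that point is a routine transfer along the two already-established lemmas.
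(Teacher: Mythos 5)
Your proposal is correct and follows essentially the same route as the paper: since $L$ is a leaf, every point of $L$ is singular, so its unique feasible set is $L_{\text{co}}$, and Lemma~\ref{lem:feasiblenodes} transfers this to the unique feasible set $L_{\text{co}} \cup L_c$ of $D$ covered by $L$. (The parenthetical appeal to Theorem~\ref{thm:transdc} is unnecessary and not really what that theorem asserts; your direct argument from the definitions of loop and coloop together with the nonemptiness of $\mathcal{F}(L)$ is the right one and suffices.)
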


\begin{proof} Since every point of $L$ is singular, $L$ has a unique feasible set consisting of all its coloops, namely $L_{\text{co}}$. Therefore, by Lemma \ref{lem:feasiblenodes}, $L$ covers a unique feasible set $F = L_{\text{co}} \cup L_c$ of $D$.
\end{proof}

Lemma \ref{lem:onefeasible} allows us to rewrite the expansion in Equation \ref{eq:eeee} of $Q_{(w,x,0)}(D;y)$ over leaves of $\mathcal{T}(D)$ as a feasible-set expansion. Let $D = (E,\mathcal{F})$ be a delta-matroid and let $<$ be a total order on $E$. For each $F \in \mathcal{F}$, let $L(F)$ be the unique leaf covering $F$ guaranteed by Lemma \ref{lem:leafcover}.

\begin{lemma} \label{lem:ffff} Let $D = (E,\mathcal{F})$ be a delta-matroid and let $<$ be a total order on $E$. Then \begin{align}
    Q_{(w,x,0)}(D;y) &= \sum_{F \in \mathcal{F}}  w^{|L(F)_d|} x^{|L(F)_c|} (x + wy)^{|L(F)_{\text{co}}|} (w + xy)^{|L(F)_{\text{loop}}|}. \label{eq:ffff}
\end{align}
\end{lemma}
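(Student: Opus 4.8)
The plan is to deduce Lemma~\ref{lem:ffff} from Lemma~\ref{lem:leaves} purely by re-indexing: the sum in Equation~\ref{eq:eeee} runs over leaves of $\mathcal{T}(D)$, and I would convert it into a sum over feasible sets by means of the assignment $F \mapsto L(F)$. The only thing that needs to be checked is that this assignment is a bijection from $\mathcal{F}$ onto $\mathcal{L}(D)$, after which the stated identity is immediate by substituting $L = L(F)$ term by term.

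To see the bijection, first I would argue injectivity: if $L(F) = L(F')$, then this common leaf covers both $F$ and $F'$, but by Lemma~\ref{lem:onefeasible} every leaf covers exactly one feasible set, so $F = F'$. Next, surjectivity: given any leaf $L$, Lemma~\ref{lem:onefeasible} exhibits a feasible set $F = L_{\text{co}} \cup L_c$ that is covered by $L$; since by Lemma~\ref{lem:leafcover} this $F$ is covered by exactly one leaf and $L$ is one such leaf, we get $L(F) = L$. Hence $F \mapsto L(F)$ is a bijection $\mathcal{F} \to \mathcal{L}(D)$, and Equation~\ref{eq:eeee} rewrites as
\begin{align*}
    Q_{(w,x,0)}(D;y) &= \sum_{L \in \mathcal{L}(D)} w^{|L_d|} x^{|L_c|} (x + wy)^{|L_{\text{co}}|} (w + xy)^{|L_{\text{lo}}|} \\
    &= \sum_{F \in \mathcal{F}} w^{|L(F)_d|} x^{|L(F)_c|} (x + wy)^{|L(F)_{\text{co}}|} (w + xy)^{|L(F)_{\text{loop}}|},
\end{align*}
which is Equation~\ref{eq:ffff} (here $L_{\text{lo}}$ and $L_{\text{loop}}$ denote the same set of loops of $L$).

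Since both the existence statement ($F\mapsto L(F)$ is well defined on all of $\mathcal{F}$) and the uniqueness statements needed for the bijection are already supplied by Lemmas~\ref{lem:leafcover} and~\ref{lem:onefeasible}, there is no genuine obstacle here. The one point I would be careful to spell out is \emph{surjectivity} onto $\mathcal{L}(D)$ — i.e. that no leaf's contribution is silently dropped in the re-indexing — which is exactly where the ``exactly one leaf'' clause of Lemma~\ref{lem:leafcover} combined with ``every leaf covers a feasible set'' (Lemma~\ref{lem:onefeasible}) is used.
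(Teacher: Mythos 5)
Your proposal is correct and matches the paper's (implicit) argument: the paper gives no separate proof of this lemma, instead relying on exactly the re-indexing via the bijection $F \mapsto L(F)$ furnished by Lemmas~\ref{lem:leafcover} and~\ref{lem:onefeasible}. Your explicit verification of injectivity and surjectivity simply spells out what the paper leaves to the reader, and your observation that $L_{\text{lo}}$ and $L_{\text{loop}}$ denote the same set is right (it is a notational slip in the statement).
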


By computing the exponents of Equation \ref{eq:ffff} in terms of activities with respect to $F$, we will obtain our desired activity-based feasible-set expansion. To that end, we show in Theorem \ref{thm:activeorientable} that $E(L(F))$ is precisely the set of active and orientable points with respect to $F$. Note that this result generalizes theorems for quasi-trees in ribbon-graphs appearing in \cite{CKS2011,VT2011,B2012,De}. We will first need a lemma regarding interlacement and orientability.

\begin{lemma} \label{prop:trivialorientable} Let $D = (E,\F)$ be a delta-matroid and let $F \in \F$ and $a \in E$. If $a$ is $F$-orientable, then any feasible set in $D*F$ containing $a$ intersects $I(F;a)$.
\end{lemma}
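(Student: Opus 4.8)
The plan is to work inside the twisted delta-matroid $D * F$ and invoke the symmetric exchange axiom with the empty set playing the role of one of the two feasible sets. First I would record two facts. Since $F$ is feasible in $D$, the empty set $\emptyset = F \Delta F$ is feasible in $D * F$ (this is the observation made just after the definition of twist). And since $a$ is $F$-orientable, Definition \ref{def:delta_int} says precisely that $\{a\}$ is \emph{not} feasible in $D * F$.

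Next, let $G$ be any feasible set of $D * F$ with $a \in G$. I would apply the symmetric exchange axiom in $D * F$ to the feasible sets $X = \emptyset$ and $Y = G$, using the element $a \in X \Delta Y = G$. This produces an element $b \in G$, not necessarily distinct from $a$, with $\emptyset \Delta \{a,b\} = \{a,b\}$ feasible in $D * F$. The crucial point -- and the only place the orientability hypothesis does any real work -- is that $b$ cannot equal $a$: if it did, the set $\{a,b\}$ would be the singleton $\{a\}$, which we have just observed is not feasible in $D * F$. So $b$ is an element of $G$ distinct from $a$ for which $\{a,b\}$ is feasible in $D * F$, i.e. $\{a,b\}$ is $F$-paired.

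Finally, I would conclude that $b \in I(F;a)$. Because $a$ is $F$-orientable, at most one of $a$ and $b$ is $F$-nonorientable, so condition (1) of Definition \ref{thm:pairs} applies to the $F$-paired set $\{a,b\}$, making $a$ and $b$ $F$-interlaced. Hence $b \in G \cap I(F;a)$, so this intersection is nonempty, which is exactly the claim.

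I do not expect a serious obstacle here; the entire content is the choice of which feasible sets to feed into the symmetric exchange axiom. Taking $X = \emptyset$ is what forces the exchanged element $b$ to differ from $a$ -- orientability of $a$ rules out the degenerate output of the axiom -- and this is precisely what turns the existential content of the axiom into a concrete element of $G$ interlaced with $a$. The only bookkeeping to check carefully is that "$\{a,b\}$ feasible in $D * F$" together with "$a$ is $F$-orientable" really does yield "$a$ and $b$ are $F$-interlaced" through Definition \ref{thm:pairs}, but this is immediate since the orientability of $a$ makes the "at most one nonorientable" clause automatic.
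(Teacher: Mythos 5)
Your proof is correct and follows essentially the same route as the paper's: apply the symmetric exchange axiom in $D*F$ to $\emptyset$ and the given feasible set, use $F$-orientability of $a$ to rule out $b = a$, and conclude interlacement via clause (1) of the definition. No issues.
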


\begin{proof} Suppose $a$ is $F$-orientable. Let $H$ be a feasible set in $D*F$ containing $a$. Since $\emptyset$ is feasible in $D*F$, we can apply the Symmetric Exchange Axiom to $\emptyset \Delta H$. Since $a \in \emptyset \Delta H$, there exists $b \in \emptyset \Delta H$ such that $\emptyset \Delta \{a,b\} = \{a,b\}$ is feasible in $D*F$. Since $a$ is $F$-orientable, $a \neq b$. Thus, $a$ and $b$ form an $F$-pair in which at most one point is nonorientable, i.e. $a$ and $b$ are $F$-interlaced. Thus, $b \in H \cap I(F;a)$.
\end{proof}

Nonsingular points are essential to constructing $\mathcal{T}(D)$, and so it will be useful to be able to recognize these in terms of the covering relation.

\begin{lemma} \label{lem:feasnonsingular} Let $D = (E,\mathcal{F})$ be a delta-matroid and let $<$ be a total order on $E$. Let $D'$ be a node of $\mathcal{T}(D)$. Let $a \in E(D')$. Suppose there exist distinct feasible sets $F$ and $F'$ of $D$ both covered by $D'$, with $a \in F$ and $a \not \in F'$. Then $a$ is nonsingular in $D'$.
\end{lemma}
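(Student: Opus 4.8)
The plan is to reduce the statement to Lemma \ref{lem:feasiblenodes}, which converts feasibility in the node $D'$ to feasibility in $D$ via the shift by $D'_c$. The only real work is the bookkeeping needed to see that ``covered by $D'$'' together with ``feasible in $D$'' puts a set squarely inside the hypothesis of that lemma.

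First I would record the structure of the ground set along the path $P$ from $D$ to $D'$ in $\mathcal{T}(D)$: each edge of $P$ removes exactly one point, either by contraction or by deletion, so $E = D'_c \sqcup D'_d \sqcup E(D')$ (a disjoint union). Consequently, if $A \subseteq E$ is covered by $D'$, i.e. $D'_c \subseteq A \subseteq D'_c \cup E(D')$, then $A \cap D'_d = \emptyset$ and $A \setminus D'_c \subseteq E(D')$. Combining this with Lemma \ref{lem:feasiblenodes}: if $A$ is feasible in $D$ and covered by $D'$, then, setting $A^- := A \setminus D'_c$, we have $A^- \subseteq E(D')$ and $A^- \cup D'_c = A$, so $A^-$ is feasible in $D'$.

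Next I would apply this to the two given feasible sets $F$ and $F'$. Since $a \in E(D')$ and $E(D') \cap D'_c = \emptyset$, we have $a \notin D'_c$; hence $a \in F$ gives $a \in F \setminus D'_c$, and $a \notin F'$ gives $a \notin F' \setminus D'_c$. By the reduction above, both $F \setminus D'_c$ and $F' \setminus D'_c$ are feasible sets of $D'$. Thus $a$ belongs to the feasible set $F \setminus D'_c$ of $D'$, so $a$ is not a loop of $D'$; and $a$ fails to belong to the feasible set $F' \setminus D'_c$ of $D'$, so $a$ is not a coloop of $D'$. Therefore $a$ is nonsingular in $D'$, which is what we wanted.

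I do not expect a genuine obstacle here; the argument is essentially immediate given Lemma \ref{lem:feasiblenodes}. The one place to be careful is verifying that the covering condition forces $F \cap D'_d = \emptyset$ (so that Lemma \ref{lem:feasiblenodes} genuinely applies), and checking that the degenerate cases — $D' = D$, or $D'_c = \emptyset$, or $D'_d = \emptyset$ — are handled by exactly the same bookkeeping.
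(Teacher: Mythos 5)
Your proof is correct and follows essentially the same route as the paper: both write each covered feasible set $F$ as $D'_c \cup H$ with $H = F \setminus D'_c$ feasible in $D'$ via Lemma \ref{lem:feasiblenodes}, then observe that $a$ lies in one such $H$ but not the other, so it is neither a loop nor a coloop of $D'$. Your extra care about the decomposition $E = D'_c \sqcup D'_d \sqcup E(D')$ is sound but not a departure from the paper's argument.
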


\begin{proof} Since $F$ and $F'$ are covered by $D'$, $F = D'_c \cup H$ and $F_2 = D'_c \cup H'$ where $H, H' \in \mathcal{F}(D')$. Now $a \not \in D'_c$, and so $a \in H$. Moreover, $a \not \in F'$ and therefore $a \not \in H'$. Thus, $a$ is in at least one feasible set of $D'$, and so is not a loop, and $a$ is not in at least one feasible set of $d'$, and so is not a coloop. Therefore, $a$ is nonsingular.
\end{proof}

\begin{lemma}\label{lem:nonsingular} Let $D = (E,\mathcal{F})$ be a delta-matroid and let $<$ be a total order on $E$. Let $D'$ be a node of $\mathcal{T}(D)$ covering the feasible set $F$. Suppose $a,b \in E(D')$ are $F$-interlaced, and at least one of $a$ or $b$ is  $F$-orientable. Then $a$ and $b$ are both nonsingular in $D'$.
\end{lemma}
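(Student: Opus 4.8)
The plan is to manufacture, starting from $F$, a second feasible set of $D$ that is still covered by $D'$ and that differs from $F$ exactly at $a$ and $b$; Lemma \ref{lem:feasnonsingular} then delivers the conclusion for both points at once.

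First I would use the orientability hypothesis to pin down which clause of the definition of $F$-interlacement (Definition \ref{thm:pairs}) is in force. Clause (2) requires \emph{both} $a$ and $b$ to be $F$-nonorientable, so the assumption that at least one of them is $F$-orientable forces clause (1): the pair $\{a,b\}$ is $F$-paired, i.e. $\{a,b\}$ is feasible in $D*F$. Unwinding the definition of twist, this says precisely that $F' := F \Delta \{a,b\} \in \mathcal{F}$.

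Next I would check that $F'$ is covered by $D'$. Since $a,b \in E(D')$, neither point lies in $D'_c$ nor in $D'_d$ (the contracted and deleted points are, by construction, not in $E(D')$). Hence $F'$ and $F$ agree outside $E(D')$, and from $D'_c \subseteq F \subseteq D'_c \cup E(D')$ (which holds because $D'$ covers $F$) one reads off $D'_c \subseteq F' \subseteq D'_c \cup E(D')$; equivalently, one may invoke Lemma \ref{lem:feasiblenodes}. Since also $F' \in \mathcal{F}$, we conclude that $F'$ is a feasible set of $D$ covered by $D'$.

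Finally, because $F' = F \Delta \{a,b\}$, exactly one of $F,F'$ contains $a$ and exactly one contains $b$; in particular $F \ne F'$. Applying Lemma \ref{lem:feasnonsingular} to the two distinct covered feasible sets $F,F'$, which differ at $a$, shows $a$ is nonsingular in $D'$; applying it to the same pair, which differ at $b$, shows $b$ is nonsingular in $D'$. The only point requiring any care is the bookkeeping in the second step—keeping straight that $D'_c$ and $D'_d$ are disjoint from $E(D')$ and hence from $\{a,b\}$, so that twisting by $\{a,b\}$ preserves the covering relation—together with the initial observation that the orientability hypothesis is exactly what rules out the "separated" case of interlacement; everything else is immediate.
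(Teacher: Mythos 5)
Your proposal is correct and follows essentially the same route as the paper: form $F' = F\,\Delta\,\{a,b\}$ (feasible because orientability forces the ``paired'' clause of interlacement), observe that $F'$ is still covered by $D'$ since $a,b \in E(D')$, and apply Lemma \ref{lem:feasnonsingular} to the pair $F, F'$. Your write-up merely makes explicit two steps the paper leaves implicit, namely why clause (2) of Definition \ref{thm:pairs} is excluded and why the covering relation is preserved.
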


\begin{proof} Since $a$ and $b$ are $F$-interlaced and at least one is $F$-orientable, $F' = F \Delta \{a,b\}$ is feasible in $D$. Since $F$ is covered by $D'$ and both $a$ and $b$ are in $E(D')$, $F'$ is also covered by $D'$. Now $a \in F$ if and only if $a \not \in F'$ and $b \in F$ if and only if $b \not \in F'$. Thus, by Lemma \ref{lem:feasnonsingular}, both $a$ and $b$ are nonsingular in $D'$.
\end{proof}

We can now prove that $E(L(F))$ is the set of active and orientable points with respect to $F$.

\begin{theorem} \label{thm:activeorientable}Let $D = (E,\mathcal{F})$ be a delta-matroid and let $<$ be a total order on $E$. Let $F \in \mathcal{F}$. Let $L = L(F)$. Then $a \in E(L)$ if and only if $a$ is active and orientable with respect to $F$.
\end{theorem}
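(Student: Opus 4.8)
The plan is to characterize $E(L(F))$ by descending along the path $P$ in $\mathcal{T}(D)$ from the root $D$ to the leaf $L = L(F)$, tracking at each node which points of the current node's groundset are active and orientable with respect to $F$. The key structural fact to exploit is Lemma~\ref{lem:feasiblenodes}/the covering relation: every node $D'$ on $P$ covers $F$, so $F$-interlacement in $D$ restricts nicely to the minors along $P$. I would prove the two inclusions separately.

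For the inclusion $E(L) \subseteq \{a : a \text{ active and orientable w.r.t. } F\}$, let $a \in E(L)$. First, $a$ is orientable: if $a$ were $F$-nonorientable, then $\{a\}$ is feasible in $D*F$, and one checks (using Lemma~\ref{lem:feasiblenodes} applied to $D*F$, or directly) that $F \Delta \{a\}$ is a feasible set of $D$ distinct from $F$ covered by $L$, contradicting the uniqueness of the feasible set covered by a leaf (Lemma~\ref{lem:onefeasible}). Next, $a$ is active: suppose not, so $a$ is $F$-interlaced with some $b < a$. I would show $b \in E(L)$ as well — here is where the main work lies — and then invoke Lemma~\ref{lem:nonsingular} (at least one of $a,b$ is $F$-orientable, namely $a$) to conclude that $a$ and $b$ are both nonsingular in $L$, contradicting that $L$ is a leaf. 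To see $b \in E(L)$: walk down $P$; at the first node $D'$ where one of $a,b$ — say it is some point $p \in \{a,b\}$ — is chosen as the highest nonsingular point to be deleted or contracted, note that by Lemma~\ref{lem:nonsingular} \emph{both} $a$ and $b$ are nonsingular in $D'$, so the other point $q$ of $\{a,b\}$ is also nonsingular; but $p$ was chosen as the \emph{highest} nonsingular point, and the interlacement/orientability hypothesis forces a comparison between $a$ and $b$ under $<$ that should be incompatible with $p$ being removed before $q$ when $q \in E(L)$. Managing this ordering argument cleanly is the crux; one needs to argue that neither $a$ nor $b$ is ever removed along $P$, so both survive to $E(L)$.

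For the reverse inclusion, suppose $a$ is active and orientable with respect to $F$ but $a \notin E(L)$. Then along $P$ there is a node $D'$ at which $a$ is chosen as the highest-ordered nonsingular point of $D'$ and is deleted or contracted. Since $D'$ covers $F$ and $a$ is nonsingular in $D'$, there are distinct feasible sets of $D'$ differing on $a$; pulling these back via Lemma~\ref{lem:feasiblenodes} and applying the symmetric exchange axiom in $D*F$ (as in the proof of Lemma~\ref{prop:trivialorientable}), I would produce a point $b \in E(D')$ that is $F$-interlaced with $a$. The point $b$ is then also nonsingular in $D'$ by Lemma~\ref{lem:nonsingular} (using that $a$ is $F$-orientable), so $b \le a$ in the order by maximality of $a$ as the chosen point; since $a$ is active, $b \neq a$ forces $a < b$, a contradiction — unless $b = a$, which is excluded by orientability. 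This yields $a \in E(L)$.

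The main obstacle I anticipate is the bookkeeping in the forward direction: showing that an active, orientable point (or the low-ordered point witnessing inactivity) is never removed along $P$. The difficulty is that the notion of "highest nonsingular point" is defined relative to the \emph{current} minor, whose groundset shrinks as we descend, and nonsingularity in a minor is not the same as in $D$. The right tool is Lemma~\ref{lem:feasnonsingular} together with Lemma~\ref{lem:nonsingular}: these let me certify that a point stays nonsingular in a node exactly when two $F$-related feasible sets are still covered by that node, and I would need to check that $F$ and $F \Delta\{a,b\}$ (or $F$ and an appropriate exchange partner) remain covered all the way down $P$ until a contradiction with the leaf property is reached. I expect an induction on the level of the node, with the covering relation as the maintained invariant, to make this rigorous.
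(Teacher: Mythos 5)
Your proposal is correct and follows essentially the same route as the paper: the same computation tree, the same covering invariant along the path $P$, and the same lemmas (Lemma~\ref{lem:onefeasible}, Lemma~\ref{lem:nonsingular}, Lemma~\ref{prop:trivialorientable}) deployed at the same places. The step you flag as the crux closes in one line from the lemmas you already cite: every node $D'$ on $P$ covers $F$ and satisfies $E(L)\subseteq E(D')$, so at the first node where one of $a,b$ is the chosen (highest-ordered nonsingular) point, Lemma~\ref{lem:nonsingular} makes \emph{both} nonsingular there, forcing the chosen point to be $\max(a,b)=a$; since $a\in E(L)$ is never removed, neither is $b$, whence $b\in E(L)$ and $L$ has a nonsingular point, contradicting leafness. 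The paper runs this argument in the other direction: it first observes that $b\in E(L)$ is impossible outright (else $F\Delta\{a,b\}$ would be a second feasible set covered by $L$, against Lemma~\ref{lem:onefeasible}), locates the ancestor at which $b$ is removed, and reads off $b>a$ there -- same mechanism, slightly less case analysis. In the reverse direction, the one point you assert without justification -- that the exchange partner $b$ from Lemma~\ref{prop:trivialorientable} lies in $E(D')$ -- is exactly what fails: since both $F$ and $F'$ are covered by $D'$, one has $F\Delta F'\subseteq E(D')$, so $b\in E(D')$ does hold, and your order-comparison contradiction $b\le a<b$ goes through (the paper instead combines $I(F;a)\cap E(D')=\emptyset$ with this same containment to contradict $b\in F\Delta F'$). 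With these two points filled in, no gaps remain.
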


\begin{proof} Suppose $a \in E(L)$. By way of contradiction, suppose $a$ is $F$-nonorientable. Then there exists $F' \in \F$ such that $F \Delta F' = \{a\}$. But then $F \neq F'$ and $L$ covers $F'$. This contradicts Lemma \ref{lem:onefeasible}. Thus, $a$ is orientable. Next, we will show that $a$ is active. Suppose $b$ is $F$-interlaced with $a$. Since $a$ is orientable, $F' = F \Delta \{a,b\} \in \mathcal{F}$. Observe that if $b$ is also in $E(L)$, then $L$ would cover $F'$, contradicting Lemma \ref{lem:onefeasible}. Thus, $b \not \in E(L)$. Hence, there exists a unique nearest ancestor $D'$ of $L$ in $\mathcal{T}(D)$ such that $b \in E(D')$ and $b$ is the highest-ordered nonsingular point of $D'$. Now $F$ is covered by $D'$, and we have that $a,b \in E(D')$ are $F$-interlaced with $a$ being $F$-orientable. Thus, by Lemma \ref{lem:nonsingular}, $a$ is nonsingular in $D'$, implying $b>a$.

Now, suppose $a$ is active and orientable with respect to $F$. By way of contradiction, suppose $a \not \in E(L)$. Let $D'$ be the nearest ancestor of $L$ such that $a \in E(D')$ and $a$ is the highest-ordered nonsingular point of $D'$. We claim that $I(F;a) \cap E(D') = \emptyset$. Indeed suppose otherwise, and let $b \in I(F;a) \cap E(D')$. Then $a,b \in E(D')$ are $F$-interlaced, $F$ is covered by $D'$, and $a$ is $F$-orientable. Thus, by Lemma \ref{lem:nonsingular}, $b$ is nonsingular in $D'$. But since $a$ is active, $a < b$, which contradicts the assumption that $a$ is the highest-ordered nonsingular point of $D'$. Thus, $b \not \in E(D')$. Therefore, $I(F;a) \cap E(D') = \emptyset.$

Consider the following (mutually exclusive) cases: either $D' / a$ covers $F$ or $D' \setminus a$ covers $F$. Suppose $D' \setminus a$ covers $F$. Since $a$ is nonsingular in $D'$, there is a feasible set $F'$ covered by $D / a$. Note that since $F$ is covered by $D' \setminus a$ and $F'$ is covered by $F /a$, $a \in F \Delta F'$. Moreover, by the definition of twisting, $F \Delta F'$ is feasible in $D*F$. Thus, by Lemma \ref{prop:trivialorientable}, there exists some point $b \in I(F;a) \cap (F \Delta F').$ Since $I(F;a) \cap E(D') = \emptyset$, $b \not \in E(D').$ Thus, $b \in D'_c$ or $b \in D'_d$. Since both $F$ and $F'$ are covered by $D$, if $b \in D'_c$ then $b \in F \cap F'$ and if $b \in D'_d$ then $b \not \in F \cup F'$. In either case, we find that $b \not \in F \Delta F'$, a contradiction.

The case where $D' / a$ covers $F$ leads to a similar contradiction. Therefore, $a \in E(L)$ as desired.
\end{proof}

\begin{cor} \label{cor:core} Let $D = (E,\mathcal{F})$ be a delta-matroid and let $<$ be a total order on $E$. Let $F \in \mathcal{F}$. Let $\text{In}(F)$ be the set of internal, active, and orientable points with respect to $F$ and let $\text{Ex}(F)$ be the set of external, active, and orientable points with respect to $F$. Set $i(F) = |\text{In}(F)|$ and $j(F) = |\text{Ex}(F)|.$ Then
\begin{enumerate}
    \item $|L(F)_{\text{co}}| = i(F)$,
    \item $|L(F)_{\text{lo}}| = j(F)$
    \item $|L(F)_c| + |L(F)_{\text{co}}| = |F|$, and
    \item $|L(F)_d| = |E| - |F| - j(F)$.
\end{enumerate}
\end{cor}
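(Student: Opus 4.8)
The plan is to establish the four identities of Corollary~\ref{cor:core} by chaining together the structural facts already proved about the tree $\mathcal{T}(D)$, the covering relation, and the characterization of $E(L(F))$ from Theorem~\ref{thm:activeorientable}. The central observation is that, for a leaf $L = L(F)$, every point of $E(D)$ falls into exactly one of four classes: it lies in $L_c$ (contracted along the path to $L$), in $L_d$ (deleted along that path), in $L_{\text{co}}$ (a coloop of $L$), or in $L_{\text{lo}}$ (a loop of $L$); moreover $E(L) = L_{\text{co}} \sqcup L_{\text{lo}}$ and $E(D) = L_c \sqcup L_d \sqcup L_{\text{co}} \sqcup L_{\text{lo}}$. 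Once this partition is in hand, the arithmetic identities (3) and (4) should follow from bookkeeping on $|F|$ and $|E|$, and (1) and (2) reduce to matching each of the two sets $\text{In}(F)$, $\text{Ex}(F)$ with $L_{\text{co}}$, $L_{\text{lo}}$ respectively.

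First I would prove (1) and (2). By Theorem~\ref{thm:activeorientable}, $E(L) = L_{\text{co}} \sqcup L_{\text{lo}}$ is exactly the set of active and orientable points with respect to $F$. By Lemma~\ref{lem:onefeasible}, the unique feasible set covered by $L$ is $F = L_{\text{co}} \cup L_c$, so for a point $a \in E(L)$ we have $a \in L_{\text{co}}$ iff $a \in F$ (since $a \notin L_c$, as $L_c \cap E(L) = \emptyset$) and $a \in L_{\text{lo}}$ iff $a \notin F$. In other words, among the active-and-orientable points, $L_{\text{co}}$ is precisely those that are internal and $L_{\text{lo}}$ is precisely those that are external. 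That is exactly the definitions of $\text{In}(F)$ and $\text{Ex}(F)$, so $|L_{\text{co}}| = i(F)$ and $|L_{\text{lo}}| = j(F)$, giving (1) and (2).

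Next I would prove (3). Since $F = L_{\text{co}} \cup L_c$ (Lemma~\ref{lem:onefeasible}) and this union is disjoint (as $L_{\text{co}} \subseteq E(L)$ while $L_c \cap E(L) = \emptyset$), taking cardinalities gives $|F| = |L_{\text{co}}| + |L_c| = |L(F)_c| + |L(F)_{\text{co}}|$ immediately. For (4), I would use that $E$ is the disjoint union $L_c \sqcup L_d \sqcup L_{\text{co}} \sqcup L_{\text{lo}}$, so $|E| = |L_c| + |L_d| + |L_{\text{co}}| + |L_{\text{lo}}|$. Rearranging and substituting (3) yields $|L_d| = |E| - (|L_c| + |L_{\text{co}}|) - |L_{\text{lo}}| = |E| - |F| - |L_{\text{lo}}|$, and then (2) gives $|L_{\text{lo}}| = j(F)$, so $|L(F)_d| = |E| - |F| - j(F)$, which is (4).

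The only genuine subtlety — and the step I would be most careful about — is justifying the disjointness claims underlying the partition, namely that $L_c$, $L_d$, and $E(L)$ are pairwise disjoint and cover $E$. This is essentially built into the construction of $\mathcal{T}(D)$ in Definition~\ref{def:tree}: following the path from $D$ to $L$, each point encountered is either contracted or deleted (and thereafter absent from the ground set), while the points never chosen remain in $E(L)$; since at each step the chosen point is removed from the ground set, no point can be both contracted and deleted, and $L_c \cup L_d$ is exactly the complement of $E(L)$ in $E$. I would state this partition explicitly as a preliminary remark before doing the four-part computation, after which each part is a one-line substitution.
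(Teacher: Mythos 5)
Your proof is correct and follows essentially the same route as the paper: it uses Theorem~\ref{thm:activeorientable} to identify $E(L(F))$ with the active-and-orientable points, Lemma~\ref{lem:onefeasible} to split $F$ as $L(F)_{\text{co}} \sqcup L(F)_c$, and then set arithmetic on the partition $E = L_c \sqcup L_d \sqcup L_{\text{co}} \sqcup L_{\text{lo}}$ for parts (3) and (4). Your explicit preliminary remark justifying that partition from the construction of $\mathcal{T}(D)$ is a reasonable addition the paper leaves implicit, but it does not change the argument.
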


\begin{proof} First, note that Theorem \ref{thm:activeorientable} implies the equalities
\begin{equation*}
    \text{In}(F) = E(L(F)) \cap F = L(F)_{\text{co}}
\end{equation*}
and
\begin{equation*}
    \text{Ex}(F) = E(L(F)) \setminus F = L(F)_{\text{lo}}
\end{equation*}
of sets. Part (1) follows from the first of these equalities, while Part (2) follows from the second. 

Next, by Lemma \ref{lem:onefeasible} we have $F = L(F)_c \cup L(F)_{\text{co}}$, where the union is disjoint since, by definition, $L(F)_{\text{co}} \subseteq E(L(F))$ while $L(F)_c \cap E(L(F)) = \emptyset.$ Thus, $|F| = |L(F)_c| + |L(F)_{\text{co}}$, proving part (3).

To prove Part (4), note that the points deleted from $D$ to obtain $L$ are precisely those points that are neither in $E(L)$ nor were contracted from $D$ to obtain $L$. That is, \begin{equation*}L(F)_d = E \setminus (E(L(F)) \cup L(F)_c).\end{equation*} But $E(L(F)) \cup L(F)_c = F \sqcup L(F)_{\text{lo}}$, where $\sqcup$ denotes disjoint union, and so \begin{equation*}|L(F)_d| = |E \setminus (F \sqcup L(F)_{\text{lo}})| = |E| - |F| - j(F).\end{equation*}
\end{proof}

We can now compute a feasible-set expansion for the transition polynomial $Q_{(w,x,0)}(D;y)$.

\begin{theorem} \label{thm:transfeas} Let $D = (E,\mathcal{F})$ be a delta-matroid. Let $<$ be any total order on $E$. Let $F \in \mathcal{F}$. Let $i(F)$ be the number of internal, active, and orientable points with respect to $F$ and let $j(F)$ be the number of external, active, and orientable points with respect to $F$. Then
\begin{equation}
    Q_{(w,x,0)}(D;y) = \sum_{F \in \mathcal{F}} w^{|E| - |F|} x^{|F|} (1 + (w/x)y)^{i(F)} (1 + (x/w)y)^{j(F)}
\end{equation}
\end{theorem}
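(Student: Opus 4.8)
The plan is to derive this directly from Lemma~\ref{lem:ffff} together with the exponent computations of Corollary~\ref{cor:core}; no new structural input is needed, since all of the combinatorial work has already been carried out in Theorem~\ref{thm:activeorientable}. Concretely, I would start from the feasible-set rewriting
\[
Q_{(w,x,0)}(D;y) = \sum_{F \in \mathcal{F}} w^{|L(F)_d|}\, x^{|L(F)_c|}\, (x+wy)^{|L(F)_{\text{co}}|}\, (w+xy)^{|L(F)_{\text{lo}}|}
\]
provided by Lemma~\ref{lem:ffff}, and substitute the four identities of Corollary~\ref{cor:core}: $|L(F)_{\text{co}}| = i(F)$, $|L(F)_{\text{lo}}| = j(F)$, $|L(F)_c| = |F| - i(F)$ (from part~(3)), and $|L(F)_d| = |E| - |F| - j(F)$. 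This turns the summand into $w^{|E|-|F|-j(F)}\, x^{|F|-i(F)}\, (x+wy)^{i(F)}\, (w+xy)^{j(F)}$.

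Next I would factor $x$ out of the first binomial and $w$ out of the second, writing $(x+wy)^{i(F)} = x^{i(F)}\,(1+(w/x)y)^{i(F)}$ and $(w+xy)^{j(F)} = w^{j(F)}\,(1+(x/w)y)^{j(F)}$. Collecting the powers of $x$ gives $x^{|F|-i(F)}\cdot x^{i(F)} = x^{|F|}$, and collecting the powers of $w$ gives $w^{|E|-|F|-j(F)}\cdot w^{j(F)} = w^{|E|-|F|}$, so the summand becomes exactly $w^{|E|-|F|}\, x^{|F|}\, (1+(w/x)y)^{i(F)}\,(1+(x/w)y)^{j(F)}$, which is the claimed expression. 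Summing over $F \in \mathcal{F}$ then completes the proof.

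The only point that warrants a word of care is that the factored form involves the ratios $w/x$ and $x/w$, so a priori the manipulation takes place in a localization rather than in the polynomial ring. I would note that this is harmless: since internal active orientable points lie in $F$ we have $i(F) \le |F|$, and since external active orientable points lie outside $F$ we have $j(F) \le |E| - |F|$; hence after clearing denominators every term of the right-hand side is an honest monomial, and the asserted equality is one of genuine polynomials. This is really the only ``obstacle,'' and it is a formality; the substantive content is entirely in Corollary~\ref{cor:core} (and, behind it, in Theorem~\ref{thm:activeorientable}).
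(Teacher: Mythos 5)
Your proposal is correct and follows essentially the same route as the paper: it starts from Lemma~\ref{lem:ffff}, substitutes the exponent identities of Corollary~\ref{cor:core}, and factors $x^{i(F)}$ and $w^{j(F)}$ out of the two binomials, differing only in the order of these algebraic steps (plus a harmless extra remark about working in a localization). No gaps.
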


\begin{proof} Note that
\begin{align}
    Q_{(w,x,0)}(D;y) &= \sum_{F \in \mathcal{F}} w^{|L(F)_d|} x^{|L(F)_c|} (x + wy)^{|L(F)_{\text{co}}|} (w + xy)^{|L(F)_{\text{lo}}|} \notag \\
    &= \sum_{F \in \mathcal{F}} w^{|L(F)_d|} x^{|L(F)_c| + |L(F)_{\text{co}}|} (1 + (w/x)y)^{|L(F)_{\text{co}}|} (w + xy)^{|L(F)_{\text{lo}}|}\notag \\
    &= \sum_{F \in \mathcal{F}} w^{|E| - |F| - j(F)} x^{|F|} (1 + (w/x)y)^{i(F)} (w + xy)^{j(F)} \notag \\
    &= \sum_{F \in \mathcal{F}} w^{|E| - |F|} x^{|F|} (1 + (w/x)y)^{i(F)} (1 + (x/w)y)^{j(F)} \notag
\end{align}
where the first equality is simply that of Equation \ref{eq:ffff}, the third follows from Corollary \ref{cor:core}, and the remainder are standard algebraic manipulations.
\end{proof}

\section{The Interlace and Bollob\'{a}s-Riordan Polynomials}

Delta-matroids associated to ribbon-graphs are defined in terms of the edge-set of the ribbon graph, while delta-matroids associated to abstract graphs are defined in terms of the vertex-set of the abstract graph. As a result, both graph polynomials defined in terms of edge-sets (like the Bollob\'{a}s-Riordan polynomial) and graph polynomials defined in terms of vertex-sets (like the interlace polynomial) have been generalized to delta-matroids \cite{CMNR2014,BH2014}. In fact, both the Bollob\'{a}s-Riordan polynomial and interlace polynomial are specializations of the transition polynomial $Q_{(w,x,0)}(D;y)$. Thus, applying our main result, we can compute feasible set expansions of each of these polynomials in terms of delta-matroid activities. We give in Figure \ref{fig:diagram} a diagram of the combinatorial objects and polynomials involved in this section.

\begin{figure}
    \centering
    \begin{tikzcd}
    & &  q(G) \arrow[dr,dotted] \\
        \text{graphs} \arrow[rd,dotted] \arrow[rrd,bend left,dotted,"adjacency"] \arrow[urr,rightarrow,bend left] &  & & q(D)  \\
        & \Delta-\text{matroids} \arrow[r,rightarrow] & \text{interlacement} \arrow[r,Rightarrow] & Q(D) \arrow[u,Rightarrow] \arrow[d,Rightarrow] &   \\
        \text{ribbon-graphs} \arrow[ru,dashed] \arrow[rru,bend right,dashed,"interlacement"] \arrow[drr,bend right,rightarrow] & & & \tilde{R}(D) \arrow[dd, bend left, Rightarrow,"matroid"] \\
        & & \tilde{R}(G) \arrow[ur,dashed] \arrow[dr,bend right,rightarrow,"plane"]\\
        &&& T
    \end{tikzcd} 
    \caption{A diagram of the combinatorial objects and polynomials we consider.  Dotted arrows indicate the map from abstract graphs to delta-matroids. Dashed arrows indicate the map from embedded graphs to delta-matroids. Doubled arrows should be read as ``provides a feasible-set expansion for.'' The polynomials are named without reference to variables -- $q$ is the two-variable interlace polynomial of a graph or delta-matroid, $Q$ is the transition polynomial $Q_{(w,x,0)}(D;y)$, $\tilde{R}$ is the two-variable Bollob\'{a}s-Riordan polynomial of a ribbon graph or delta-matroid, and $T$ is the Tutte polynomial of a graph or matroid.}
    \label{fig:diagram}
\end{figure}
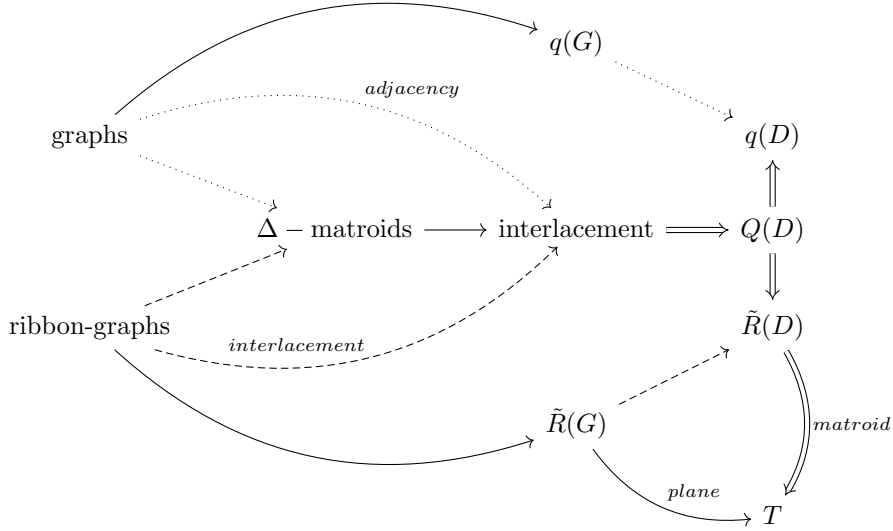

\subsection{The Interlace Polynomial}

Brijder and Hoogeboom recently generalized \cite{BH2014} both the two-variable and one-variable interlace polynomials to the setting of delta-matroids.

\begin{definition} \cite{BH2014} \label{def:interlace} Let $D = (E,\mathcal{F})$ be a delta-matroid. The \emph{two-variable interlace polynomial} of $D$ is 
\begin{equation}
    \bar{q}(D;x,y) := Q_{(1,x,0)}(D;y) = \sum_{A \subseteq E} x^{|A|} y^{r(D*A)}.
\end{equation}
The \emph{one-variable interlace polynomial} is
\begin{equation}
    q_{\Delta}(D;y) := \bar{q}(D;1,y) = \sum_{A \subseteq E} y^{r(D*A)}.
\end{equation}
\end{definition}

Theorem \ref{thm:transfeas} gives the following feasible-set expansion of $\bar{q}(D;x,y)$.

\begin{theorem} \label{thm:expand} Let $D = (E,\mathcal{F})$ be a delta-matroid. Let $<$ be any total order on $E$. Let $F \in \mathcal{F}$. Let $i(F)$ be the number of internal, active, and orientable points with respect to $F$ and let $j(F)$ be the number of external, active, and orientable points with respect to $F$. Then
\begin{equation}
    \bar{q}(D;x,y) = \sum_{F \in \mathcal{F}} x^{|F|} (1 + y/x)^{i(F)} (1 + xy)^{j(F)}.
\end{equation}
\end{theorem}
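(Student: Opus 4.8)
The plan is to obtain Theorem \ref{thm:expand} as a direct specialization of Theorem \ref{thm:transfeas}, since Definition \ref{def:interlace} states that $\bar{q}(D;x,y) = Q_{(1,x,0)}(D;y)$. So the first step is simply to substitute $w = 1$ (and the role of ``$x$'' in the transition polynomial played by the variable $x$ of the interlace polynomial, $z = 0$) into the formula of Theorem \ref{thm:transfeas}.

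Concretely, I would start from
\begin{equation*}
    Q_{(w,x,0)}(D;y) = \sum_{F \in \mathcal{F}} w^{|E| - |F|} x^{|F|} (1 + (w/x)y)^{i(F)} (1 + (x/w)y)^{j(F)}
\end{equation*}
and set $w = 1$. Then $w^{|E|-|F|} = 1$, the factor $(1 + (w/x)y)^{i(F)}$ becomes $(1 + y/x)^{i(F)}$, and the factor $(1 + (x/w)y)^{j(F)}$ becomes $(1 + xy)^{j(F)}$, leaving exactly
\begin{equation*}
    \bar{q}(D;x,y) = \sum_{F \in \mathcal{F}} x^{|F|} (1 + y/x)^{i(F)} (1 + xy)^{j(F)},
\end{equation*}
which is the claimed identity. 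One small bookkeeping point I would make explicit: the quantities $i(F)$ and $j(F)$ in the statement of Theorem \ref{thm:expand} are defined by exactly the same recipe (internal/external, active, and orientable points with respect to $F$ under the order $<$) as in Theorem \ref{thm:transfeas}, and that recipe does not reference the variables $w,x,z,y$, so they are unchanged by the specialization.

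There is essentially no obstacle here — the only thing to be careful about is the clash of notation, namely that the symbol $x$ appears both in the triple $(w,x,z)$ indexing the transition polynomial and as the first variable of $\bar{q}$. Since Definition \ref{def:interlace} already pins down $\bar{q}(D;x,y) = Q_{(1,x,0)}(D;y)$, this identification is legitimate, but I would phrase the proof so as to make clear that we are substituting into Theorem \ref{thm:transfeas} with its ``$w$'' equal to $1$ and its ``$x$'' equal to the interlace variable $x$. The proof is therefore a one-line appeal to Theorem \ref{thm:transfeas} followed by routine simplification.
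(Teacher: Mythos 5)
Your proposal is correct and matches the paper exactly: the paper's proof is the one-line observation that the theorem is an immediate consequence of Theorem \ref{thm:transfeas}, i.e., the specialization $w=1$. Your extra remarks about the notation clash and the invariance of $i(F)$ and $j(F)$ under the substitution are sound but not needed.
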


\begin{proof} This is an immediate consequence of Theorem \ref{thm:transfeas}.
\end{proof}

As a corollary, we obtain the following feasible-set expansion of the single-variable interlace polynomial.

\begin{cor} Let $D = (E,\mathcal{F})$ be a delta-matroid. For any total order $<$ on $E$ we have
\begin{align*}
   q_\Delta(D;y-1) = \sum_{F \in \mathcal{F}} y^{i(F) + j(F)}.
\end{align*}
\end{cor}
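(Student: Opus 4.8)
The plan is to obtain the corollary as a direct specialization of Theorem~\ref{thm:expand}. First I would recall from Definition~\ref{def:interlace} that $q_\Delta(D;y) = \bar q(D;1,y)$, so the natural move is to set $x = 1$ in the feasible-set expansion
\[
  \bar q(D;x,y) = \sum_{F \in \mathcal{F}} x^{|F|} (1 + y/x)^{i(F)} (1 + xy)^{j(F)}
\]
of Theorem~\ref{thm:expand}. At $x = 1$ the factor $x^{|F|}$ is $1$, the factor $(1 + y/x)^{i(F)}$ becomes $(1+y)^{i(F)}$, and the factor $(1 + xy)^{j(F)}$ becomes $(1+y)^{j(F)}$, so that
\[
  q_\Delta(D;y) = \sum_{F \in \mathcal{F}} (1+y)^{i(F)}(1+y)^{j(F)} = \sum_{F \in \mathcal{F}} (1+y)^{i(F)+j(F)},
\]
where the last step just combines the two exponential factors, which is legitimate because the bases agree.

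Next I would substitute $y \mapsto y - 1$, which sends $(1+y)$ to $y$ and hence each summand $(1+y)^{i(F)+j(F)}$ to $y^{i(F)+j(F)}$, yielding $q_\Delta(D;y-1) = \sum_{F \in \mathcal{F}} y^{i(F)+j(F)}$, as claimed.

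I do not anticipate any real obstacle here: once Theorem~\ref{thm:expand} is available the argument is pure bookkeeping. The only point worth stating carefully is that the specialization is an identity of polynomials --- the $w$ and $x$ that appeared in denominators in Theorem~\ref{thm:transfeas} have already cancelled by the time one reaches Theorem~\ref{thm:expand}, so evaluating at $x = 1$ raises no division-by-zero concern --- after which the collapse of the two interlace-polynomial factors to $(1+y)^{i(F)+j(F)}$ and the shift $y \mapsto y-1$ are immediate.
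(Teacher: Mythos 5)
Your proposal is correct and matches the paper's proof, which likewise obtains the identity by setting $x=1$ in Theorem~\ref{thm:expand} and then shifting $y \mapsto y-1$. The only difference is that you spell out the intermediate bookkeeping the paper leaves implicit.
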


\begin{proof} We have
\begin{align}
    q_{\Delta}(D;y-1) = \bar{q}(D;1,y-1) = \sum_{F \in \mathcal{F}} y^{i(F) + j(F)} \notag.
\end{align}
\end{proof}

\subsection{The Bollob\'{a}s-Riordan Polynomial}

The Bollob\'{a}s-Riordan polynomial was originally constructed as a generalization of the Tutte polynomial to the domain of ribbon graphs. Recently, Chun et al. have shown that the Bollob\'{a}s-Riordan polynomial of a ribbon graph is determined by its ribbon-graphic delta-matroid, and so they obtained a generalization of the Bollob\'{a}s-Riordan polynomial to delta-matroids \cite{2016arXiv160201306C}. A normalized two-variable version of this polynomial (originally studied for ribbon graphs) has taken on additional significance for delta-matroids due to recent results of Krajewski et al. on combinatorial Hopf algebras \cite{2015arXiv150800814K}. In particular, they have shown that the two-variable Bollob\'{a}s-Riordan polynomial of a delta-matroid is in some sense the canonical Tutte-like polynomial for delta-matroids under usual deletion and contraction \cite{2015arXiv150800814K}. In this section, we use a connection between the two-variable interlace polynomial and this two-variable Bollob\'{a}s-Riordan polynomial to obtain a feasible-set expansion of the latter in terms of delta-matroid activities. 

The two-variable Bollob\'{a}s-Riordan polynomial is defined in terms of the following rank function.

\begin{definition} (rank functions) \cite{CMNR2014} Let $D = (E,\mathcal{F})$ be a delta-matroid.  Define $\sigma(D) = \frac{1}{2}( r_{\max}(E) + r_{\min}(E))$. For $A \subseteq E$, define $\sigma_D(A) = \sigma(D|A)$. Recall that $w(D) = r_{\max}(E) - r_{\min}(E)$.
\end{definition}

\begin{definition} (Bollob\'{a}s-Riordan polynomial) \cite{CMNR2014} Let $D = (E,\mathcal{F})$ be a delta-matroid. The \emph{Bollob\'{a}s-Riordan polynomial} of $D$ is
\begin{equation*}
    \tilde{R}(D;x,y) = \sum_{A \subseteq E} (x-1)^{\sigma(E) - \sigma(A)} (y-1)^{|A| - \sigma(A)}.
\end{equation*}
\end{definition}

Note that if $D$ is a matroid, $\sigma_D$ is  precisely the rank function of $D$. Hence, for matroids, the Bollob\'{a}s-Riordan polynomial is the usual Tutte polynomial. The Bollob\'{a}s-Riordan polynomial can be obtained from a three-variable version.

\begin{definition} (three-variable Bollob\'{a}s-Riordan polynomial) \cite{CMNR2014} Let $D = (E,\mathcal{F})$ be a delta-matroid. The \emph{three-variable Bollob\'{a}s-Riordan polynomial} of $D$ is
\begin{equation*}
    R(D;x,y,z) = \sum_{A \subseteq E} (x-1)^{r_{\min}(E) - r_{\min}(A)} y^{|A| - r_{\min}(A)} z^{w(D|A)}.
\end{equation*}
\end{definition}

\begin{theorem} \cite{CMNR2014} \label{thm:threetotwo} Let $D = (E,\mathcal{F})$ be a delta-matroid. Then \begin{equation*}
    \tilde{R}(D;x+1,y+1) = x^{w(D)/2} R(D;x+1,y,1/\sqrt{xy})
\end{equation*}
\end{theorem}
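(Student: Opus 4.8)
The plan is to prove the identity by substituting the prescribed values of the variables into each side, expanding both as sums indexed by $A \subseteq E$, and matching the two expansions term by term. The only auxiliary ingredients are the definitional relations $w(D) = r_{\max}(E) - r_{\min}(E)$ and $\sigma(D|A) = \tfrac{1}{2}\big(r_{\max}(D|A) + r_{\min}(D|A)\big)$, together with $w(D|A) = r_{\max}(D|A) - r_{\min}(D|A)$, and the standard fact that $r_{\min}$ and $r_{\max}$ are compatible with restriction, so that $r_{\min}(D|A)$ agrees with the value at $A$ of the rank function of $D_{\min}$ (and likewise for $r_{\max}$); this is what lets one pass between the definition of $R$, phrased in terms of the ambient rank functions, and that of $\tilde{R}$, phrased in terms of $\sigma(D|A)$.

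First I would expand the right-hand side. From the definition of $R$,
\[
R(D;x+1,y,1/\sqrt{xy}) \;=\; \sum_{A \subseteq E} x^{\,r_{\min}(E) - r_{\min}(A)}\; y^{\,|A| - r_{\min}(A)}\;(xy)^{-w(D|A)/2},
\]
so multiplying by $x^{w(D)/2}$ produces a sum whose term at $A$ is $x^{\alpha(A)}\,y^{\beta(A)}$, where $\alpha(A) = \tfrac{1}{2}w(D) + r_{\min}(E) - r_{\min}(A) - \tfrac{1}{2}w(D|A)$ and $\beta(A) = |A| - r_{\min}(A) - \tfrac{1}{2}w(D|A)$. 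Substituting $w(D) = r_{\max}(E) - r_{\min}(E)$ and $w(D|A) = r_{\max}(A) - r_{\min}(A)$ makes the $r_{\min}$-contributions collapse, leaving $\alpha(A) = \tfrac{1}{2}(r_{\max}(E)+r_{\min}(E)) - \tfrac{1}{2}(r_{\max}(A)+r_{\min}(A)) = \sigma(E) - \sigma(A)$ and $\beta(A) = |A| - \tfrac{1}{2}(r_{\max}(A)+r_{\min}(A)) = |A| - \sigma(A)$. On the other side, substituting $x \mapsto x+1$ and $y \mapsto y+1$ in the definition of $\tilde{R}$ turns each $(x-1)$ into $x$ and each $(y-1)$ into $y$, so that $\tilde{R}(D;x+1,y+1) = \sum_{A \subseteq E} x^{\sigma(E)-\sigma(A)}\,y^{|A|-\sigma(A)}$. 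Comparing the two sums term by term over $A \subseteq E$ then yields the claim. (One could instead check that both sides satisfy the same deletion--contraction recursion with matching base cases, but the direct expansion is shorter.)

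The step I expect to be the main, if modest, obstacle is the bookkeeping around half-integer and square-root exponents. Individually, $x^{w(D)/2}$ and $(1/\sqrt{xy})^{w(D|A)}$ live in $\mathbb{Z}[\sqrt{x},\sqrt{y}\,]$, so one must confirm that after the cancellation the exponents $\sigma(E)-\sigma(A)$ and $|A|-\sigma(A)$ are read consistently on both sides --- which they are, since these are exactly the exponents appearing in the definition of $\tilde{R}$ --- and that they are non-negative, which follows from monotonicity of the matroid rank functions $r_{\min}$ and $r_{\max}$ (giving $\sigma(A) \le \sigma(E)$ from $r_{\max}(A)\le r_{\max}(E)$, $r_{\min}(A)\le r_{\min}(E)$, and $\sigma(A)\le |A|$ from $r_{\max}(A), r_{\min}(A)\le |A|$). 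Once the restriction-compatibility of $r_{\min}$ and $r_{\max}$ is in place to reconcile the two definitions, what remains is precisely the one-line exponent computation above.
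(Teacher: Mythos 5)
The paper gives no proof of this theorem---it is quoted from \cite{CMNR2014}---so the only comparison available is with the cited source; your overall strategy (expand both sides over $A \subseteq E$ and match exponents termwise) is the natural one and is essentially how the identity is established there. However, your computation rests on a stated premise that is false. You claim that $r_{\max}$ is ``compatible with restriction,'' i.e.\ that $r_{\max}(D|A)$ (the size of a largest feasible set of the restriction, which is what enters $w(D|A)$ and $\sigma(D|A)$) equals the value at $A$ of the rank function of the matroid $D_{\max}$. Take $D = (\{a,b\}, \{\emptyset, \{a,b\}\})$ and $A = \{a\}$: then $\mathcal{F}(D|A) = \{\emptyset\}$, so $r_{\max}(D|A) = 0$ and $w(D|A) = 0$, whereas the rank of $\{a\}$ in $D_{\max}$ (the free matroid on $\{a,b\}$) is $1$. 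If you carry out your substitution $w(D|A) = r_{\max}(A) - r_{\min}(A)$ with the ambient rank functions, the term at $A = \{a\}$ on the right-hand side comes out as $x^{1/2}y^{1/2}$ rather than the correct $xy$, and the termwise match with $\tilde{R}(D;x+1,y+1) = x + 2xy + y$ fails.

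The proof is repairable, and the repair isolates the one fact you actually need. The ambient $r_{\max}(A)$ for proper subsets $A$ never appears in either definition: $r_{\max}$ enters only through $w(D)$ and $\sigma(E)$ (evaluated at the full ground set, where there is no discrepancy) and through $w(D|A)$ and $\sigma(A) = \sigma(D|A)$ (both defined via the restriction). So if you keep $w(D|A) = r_{\max}(D|A) - r_{\min}(D|A)$ and $\sigma(A) = \tfrac{1}{2}\bigl(r_{\max}(D|A) + r_{\min}(D|A)\bigr)$ in the restriction reading throughout, the $r_{\max}$ contributions combine internally with no compatibility claim required. What is genuinely required is the $r_{\min}$ half of your premise: the exponent $r_{\min}(E) - r_{\min}(A)$ in the definition of $R$ uses the rank function of the matroid $D_{\min}$, and to make it cancel against the $r_{\min}(D|A)$ hidden inside $w(D|A)$ and $\sigma(D|A)$ you need $r_{\min}(D|A)$ to equal the rank of $A$ in $D_{\min}$. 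This is true, but it is not a formal triviality---its $r_{\max}$ analogue is false, as the example above shows---and it is a lemma of \cite{CMNR2014} that must be proved from the symmetric exchange axiom rather than waved through as ``standard.''
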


The three-variable Bollob\'{a}s-Riordan polynomial can be obtained from the two-variable interlace polynomial (and transition polynomial).

\begin{theorem} \label{thm:polyrel} Let $D = (E,\mathcal{F})$ be a delta-matroid. Then
\begin{align}
    \bar{q}(D;\sqrt{y/x},\sqrt{xy}) = Q_{(1,\sqrt{y/x},0)}(D;\sqrt{xy}) = (\sqrt{y/x})^{r_{\min}(E)} R(D;x+1,y,1/\sqrt{xy}) \notag
\end{align}
\end{theorem}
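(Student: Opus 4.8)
The plan is to establish the identity by unwinding all three definitions and matching terms in the sums indexed by subsets $A \subseteq E$. Concretely, I would start from the definition of the three-variable Bollob\'{a}s-Riordan polynomial
\[
R(D;x+1,y,1/\sqrt{xy}) = \sum_{A \subseteq E} x^{r_{\min}(E) - r_{\min}(A)}\, y^{|A| - r_{\min}(A)} (xy)^{-w(D|A)/2},
\]
and rewrite the exponents using $w(D|A) = r_{\max}(A) - r_{\min}(A)$. The goal is to see that, up to the global factor $(\sqrt{y/x})^{r_{\min}(E)} = (y/x)^{r_{\min}(E)/2}$, the $A$-term of the right-hand side equals $(\sqrt{y/x})^{|A|} (\sqrt{xy})^{r_{\min}(D*A)}$, which is exactly the $A$-term of $Q_{(1,\sqrt{y/x},0)}(D;\sqrt{xy})$ as given by the definition of the transition polynomial (with $w=1$, $x \mapsto \sqrt{y/x}$, $y \mapsto \sqrt{xy}$). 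The first equality, $\bar q(D;\sqrt{y/x},\sqrt{xy}) = Q_{(1,\sqrt{y/x},0)}(D;\sqrt{xy})$, is immediate from Definition \ref{def:interlace}, so the content is the second equality.

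The key step is a rank-function identity: I claim that $r_{\min}(D*A) = \sigma_D(A) - r_{\min}(A) + \tfrac{1}{2}(r_{\max}(E) - r_{\min}(E))$, or some equivalent rearrangement, so that the power of $\sqrt{xy}$ produced on the transition-polynomial side matches $|A| - r_{\min}(A) - \tfrac{1}{2}w(D|A)$ plus the appropriate piece of the global prefactor. The cleanest route is probably to recall (or re-derive from the definitions of twist, $D_{\max}$, $D_{\min}$, and restriction) the standard fact that $r_{\min}(D*A) = |A| - r_{\max}(D|A) + r_{\min}(E \setminus A \text{ part})$ — more precisely, that twisting by $A$ exchanges the role of ``elements of $A$ in a feasible set'' with ``elements of $A$ outside it,'' giving a formula of the shape $r_{\min}(D*A) = \sigma(A) + \sigma(E) - \sigma(E) \cdots$; I would pin this down by testing it first on loops, coloops, and single nonsingular elements, then invoking additivity of all the rank functions over direct sums together with a deletion-contraction or induction argument. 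Once the identity $r_{\min}(D*A)$ in terms of $\sigma_D(A)$, $r_{\min}(A)$, and $w(D)$ is in hand, matching exponents of $\sqrt{y/x}$ and $\sqrt{xy}$ term-by-term is routine bookkeeping.

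I expect the main obstacle to be exactly that rank-function identity relating $r_{\min}(D*A)$ to the minimum/maximum ranks of $D|A$ and the global $w(D)$: this is where the interplay between twisting and restriction has to be handled carefully, since $D*A$ and $D|A$ live on different ground sets and twisting does not commute naively with restriction. A secondary subtlety is ensuring the square roots and fractional exponents are consistent — the factor $(xy)^{-w(D|A)/2}$ and the prefactor $x^{w(D)/2}$ in the passage to $\tilde R$ (Theorem \ref{thm:threetotwo}) suggest $w$ is even in the relevant cases (e.g. ribbon-graphic delta-matroids), but for a general delta-matroid one should either work formally with $\sqrt{xy}$ as a single indeterminate or note that only integer combinations of the exponents ever occur; I would state this once and then proceed formally. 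After that, everything reduces to comparing coefficients of $w^{|A|}x^{|B|}$-type monomials, which needs no cleverness.
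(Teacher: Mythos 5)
The paper's own proof of the second equality is a one-line citation (Proposition 5.9 of the Chun et al.\ paper), so your proposal is in effect an attempt to prove the cited result from scratch. Your treatment of the first equality (it is literally the definition of $\bar q$) matches the paper, and your reduction of the second equality to a single rank-function identity by matching the $A$-terms of the two sums is the right move --- that is exactly what the cited proposition amounts to. Carrying out the matching, the identity you need is
\begin{equation*}
r_{\min}(D*A) \;=\; |A| + r_{\min}(E) - 2\,r_{\min}(A) - w(D|A),
\end{equation*}
where $r_{\min}(D*A)$ denotes the minimum cardinality of a feasible set of $D*A$ (equivalently $\min_{F\in\mathcal{F}}|F\Delta A|$) and $r_{\min}(A)$ is the rank of $A$ in the matroid $D_{\min}$; reassuringly, equating the $x$-exponents and the $y$-exponents forces this same relation.

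The gap is that this identity is neither correctly stated nor proved in your proposal. The form you guess, $r_{\min}(D*A) = \sigma_D(A) - r_{\min}(A) + \tfrac{1}{2}w(D)$, already fails for $D = (\{e\},\{\emptyset,\{e\}\})$ with $A=\{e\}$: the left side is $0$, while your right side is $\tfrac{1}{2} - 0 + \tfrac{1}{2} = 1$ (the correct identity gives $1 + 0 - 0 - 1 = 0$). More importantly, the method you propose for pinning the identity down --- testing loops, coloops, and single nonsingular points, then appealing to additivity over direct sums and an unspecified deletion--contraction induction --- cannot suffice: the identity must hold for every $A$ in every connected delta-matroid on arbitrarily many elements, and the three quantities $r_{\min}(D*A)$, $r_{\min}(A)$, and $w(D|A)$ do not all transform in an evidently compatible way under removal of a single point. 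Proving it requires genuine lemmas about how twisting interacts with $D_{\min}$ and with restriction; this is precisely the content of the proposition the paper cites. As written, the hardest step of the theorem is left as a sketch whose only concretely stated form is incorrect.
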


\begin{proof} The first equality follows from the definition of the two-variable interlace polynomial, the second equality is Proposition 5.9 of \cite{2016arXiv160201306C}.
\end{proof}

We therefore obtain the following feasible-set expansion of the Bollob\'{a}s-Riordan polynomial.

\begin{theorem} Let $D = (E,\mathcal{F})$ be a delta-matroid. For $F \in \mathcal{F}$, let $i(F)$ be the number of internal, active, and orientable points with respect to $F$ and let $j(F)$ be the number of external, active, and orientable points with respect to $F$. Then
\begin{equation*}
    \tilde{R}(D;x,y) = \sum_{F \in \mathcal{F}} \left(x-1\right)^{(r_{\max}(E) - |F|)/2} \left(y-1\right)^{(|F| - r_{\min}(E))/2}  x^{i(F)} y^{j(F)} \notag
\end{equation*}

\end{theorem}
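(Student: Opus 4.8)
The plan is to combine the feasible-set expansion of $\bar q$ in Theorem~\ref{thm:expand} with the chain of identities in Theorems~\ref{thm:polyrel} and~\ref{thm:threetotwo}, which together express $\tilde R$ in terms of $\bar q$ after suitable substitutions. I would work formally, or over an extension in which the relevant square roots exist, as is standard for the Bollob\'as--Riordan polynomial. First, in Theorem~\ref{thm:expand} substitute $x \mapsto \sqrt{y/x}$ and $y \mapsto \sqrt{xy}$. The two interlacement factors simplify cleanly: since $\sqrt{xy}/\sqrt{y/x} = x$ one gets $1 + y/x \mapsto 1 + x$, and since $\sqrt{y/x}\cdot\sqrt{xy} = y$ one gets $1 + xy \mapsto 1 + y$, while $x^{|F|} \mapsto (y/x)^{|F|/2}$. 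This yields
\[
  \bar q(D;\sqrt{y/x},\sqrt{xy}) = \sum_{F\in\mathcal F} (y/x)^{|F|/2}(1+x)^{i(F)}(1+y)^{j(F)}.
\]

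Next I would use Theorem~\ref{thm:polyrel} to solve for the three-variable Bollob\'as--Riordan polynomial,
\[
  R(D;x+1,y,1/\sqrt{xy}) = (\sqrt{y/x})^{-r_{\min}(E)}\,\bar q(D;\sqrt{y/x},\sqrt{xy}) = \sum_{F\in\mathcal F} (y/x)^{(|F|-r_{\min}(E))/2}(1+x)^{i(F)}(1+y)^{j(F)},
\]
and then substitute into Theorem~\ref{thm:threetotwo}, using $w(D) = r_{\max}(E) - r_{\min}(E)$, to obtain
\[
  \tilde R(D;x+1,y+1) = x^{(r_{\max}(E)-r_{\min}(E))/2}\sum_{F\in\mathcal F}(y/x)^{(|F|-r_{\min}(E))/2}(1+x)^{i(F)}(1+y)^{j(F)}.
\]
Collecting powers term by term --- the exponent of $x$ being $(r_{\max}(E)-r_{\min}(E))/2 - (|F|-r_{\min}(E))/2 = (r_{\max}(E)-|F|)/2$ and the exponent of $y$ being $(|F|-r_{\min}(E))/2$ --- and finally substituting $x \mapsto x-1$, $y \mapsto y-1$ gives the stated identity.

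Since Theorems~\ref{thm:expand},~\ref{thm:polyrel} and~\ref{thm:threetotwo} are already available, there is no genuine obstacle; the only content is the exponent bookkeeping above together with the legitimacy of the square-root substitutions. I would take care to note that $r_{\min}(E) \le |F| \le r_{\max}(E)$, so the exponents of $x-1$ and $y-1$ are nonnegative, and that (as with $\sigma$) these exponents may be half-integers in general, so the identity is to be read in the ring of formal expressions in which $R$ and $\tilde R$ naturally live. I would also verify that the rank function $r$ appearing in Definition~\ref{def:interlace} is $r_{\min}$, matching the occurrence of $r_{\min}(E)$ in Theorem~\ref{thm:polyrel}.
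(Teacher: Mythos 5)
Your proposal is correct and follows essentially the same route as the paper: specialize the feasible-set expansion of $\bar q$ at $(\sqrt{y/x},\sqrt{xy})$, pass through Theorems \ref{thm:polyrel} and \ref{thm:threetotwo}, collect the powers of $x$ and $y$, and shift variables. The only differences are cosmetic (you isolate $R$ explicitly rather than multiplying and dividing by $\sqrt{y/x}^{\,r_{\min}(E)}$, and you add a welcome remark on the nonnegativity and possible half-integrality of the exponents).
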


\begin{proof} Observe that
\begin{align}
    \tilde{R}(D;x+1,y+1) &= x^{w(D)/2} R(D;x+1,y,1/\sqrt{xy}) \notag \\
    &= \left( \frac{x^{w(D)/2}}{\sqrt{y/x}^{r_{\min}(E)}}\right) \sqrt{y/x}^{r_{\min}(E)} R(D;x+1,y,1/\sqrt{xy}) \notag \\
    &= \left(\sqrt{x^{r_{\max}(E)}/y^{r_{\min}(E)}}\right) Q_{(1,\sqrt{y/x},0)} (D; \sqrt{xy}) \notag \\
    &= \left( \sqrt{x^{r_{\max}(E)}/y^{r_{\min}(E)}} \right) \sum_{F \in \mathcal{F}} \left( \sqrt{y/x} \right)^{|F|} \left(1 + x \right)^{i(F)} \left(1 + y \right)^{j(F)} \notag \\
    &= \sum_{F \in \mathcal{F}} x^{(r_{\max}(E) - |F|)/2} y^{(|F| - r_{\min}(E))/2}  (x+1)^{i(F)} (y + 1)^{j(F)}. \notag
\end{align}

where the first equality follows from Theorem \ref{thm:threetotwo}, the third by Theorem \ref{thm:polyrel}, and the fourth by Theorem \ref{thm:transfeas}. The result follows by substituting $x$ and $y$ for $x+1$ and $y+1$.
\end{proof}

For matroids, this reduces to the usual basis expansion of the Tutte polynomial.

\begin{cor} \label{cor:matroid} Let $D = (E,\mathcal{F})$ be a matroid. Then
\begin{equation}
    \tilde{R}(D;x,y) = \sum_{F \in \mathcal{F}} x^{i(F)} y^{j(F)}.
\end{equation}
\end{cor}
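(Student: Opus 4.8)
The plan is to specialize the feasible-set expansion of $\tilde{R}$ proved in the theorem immediately above to the case where $D$ is a matroid. The key observation is structural: in a matroid every feasible set is a basis, so they all share the common cardinality $r_{\max}(E) = r_{\min}(E)$, which we may call $r(E)$. Consequently, for every $F \in \mathcal{F}$ we have $r_{\max}(E) - |F| = 0$ and $|F| - r_{\min}(E) = 0$, so the factors $(x-1)^{(r_{\max}(E) - |F|)/2}$ and $(y-1)^{(|F| - r_{\min}(E))/2}$ are each equal to $1$. Substituting into the expansion of the previous theorem then immediately yields
\begin{equation*}
\tilde{R}(D;x,y) = \sum_{F \in \mathcal{F}} x^{i(F)} y^{j(F)},
\end{equation*}
which is the claim.

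For the concluding remark that this is ``the usual basis expansion of the Tutte polynomial,'' I would additionally invoke two facts already recorded in the paper: first, the observation following the definition of $\tilde{R}$ that for matroids $\sigma_D$ is the rank function and hence $\tilde{R}$ coincides with the Tutte polynomial; and second, Theorem \ref{thm:mat-delt}, which identifies the delta-matroid quantities $i(F)$ and $j(F)$ with the classical counts of internally active and externally active elements with respect to the basis $F$ (orientability being automatic for matroids, since all feasible sets have equal size, as observed in Proposition \ref{cor:circuitinter}). Together these show the displayed identity is precisely the classical basis-activities expansion of the Tutte polynomial.

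I do not anticipate any genuine obstacle here: the corollary is a direct specialization, and the only point needing care is confirming that the two exponents really do vanish for matroids, which follows from the fact noted in Section 2 that a matroid is exactly a delta-matroid all of whose feasible sets have the same size.
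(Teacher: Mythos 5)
Your proposal is correct and matches the paper's proof exactly: both specialize the feasible-set expansion of $\tilde{R}$ from the preceding theorem and observe that for a matroid $r_{\max}(E) - |F| = |F| - r_{\min}(E) = 0$, so the two prefactors are $1$. Your additional remarks connecting $i(F)$ and $j(F)$ to classical internal/external activities via Theorem \ref{thm:mat-delt} are a correct (and welcome) elaboration of the paper's claim that this recovers the usual basis expansion of the Tutte polynomial.
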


\begin{proof} Since $D$ is a matroid, $r_{\max}(E) - |F| = |F| - r_{\min}(E) = 0$ for any $F \in \mathcal{F}$. Thus,
 \begin{align}
     \tilde{R}(M;x,y) &= \sum_{F \in \mathcal{F}} \left(x-1\right)^{(r_{\max}(E) - |F|)/2} \left(y-1\right)^{(|F| - r_{\min}(E))/2}  x^{i(F)} y^{j(F)} \notag \\
     &= \sum_{F \in \mathcal{F}} x^{i(F)} y^{j(F)}. \notag
 \end{align}
\end{proof}

\section{Conclusion}

We have shown that the transition polynomial $Q_{(w,x,0)}(D;y)$ has an activities based feasible-set expansion, and used this expansion to obtain activity expansions for the Bollob\'{a}s-Riordan and interlace polynomials. There are a number of open questions remaining.

For example, our result for the Bollob\'{a}s-Riordan polynomial applies only to the two-variable version. In \cite{CMNR2014}, the full Bollob\'{a}s-Riordan polynomial of a delta-matroid $D = (E,\mathcal{F})$ is given as a sum over subsets of $E$. Lemmas 4.7 and 4.9 partition the powerset of $E$ into parts each containing a single feasible set, and hence this sum over edge-sets can, in principle, be rewritten as a sum over feasible sets. Indeed, this was the strategy taken in \cite{CKS2011,De,VT2011} to compute spanning quasi-tree expansions of the Bollob\'{a}s-Riordan polynomial for ribbon graphs. It may be possible to take a similar approach to compute a feasible-set expansion of the full Bollob\'{a}s-Riordan polynomial for delta-matroids.

In general, this paper provides additional evidence that minor-based recursive definitions should correspond to activity-based expansions. However, we know of no general theoretical explanation of this connection. Multimatroids, objects generalizing matroids and delta-matroids, have a rich structure of minor operations: a $k$-matroid has in some sense $k$ different ``directions'' in which to take minors. Perhaps multimatroid theory can provide some insight into the connection between minor-based recursion and activities.

\section{Acknowledgements} Funding for this work was provided by the Vermont Space Grant Consortium. In addition, this research benefited greatly from the insights, at various stages, of Jo Ellis-Monaghan, Robert Brijder, Iain Moffatt, and Steven Noble. TikZ code for generating Figure \ref{fig:ribbonchar} was obtained from code written by Manuel B\"{a}renz.

\bibliographystyle{plain}
\bibliography{main}

\end{document}